\pgfplotsset{width=12cm,compat=1.4}
\numberwithin{equation}{section}
\theoremstyle{plain}
\newtheorem{thm}{Theorem}[section]
\newtheorem{lem}[thm]{Lemma}
\theoremstyle{definition}
\newtheorem{exa}[thm]{Example}
\newtheorem{conj}[thm]{Conjecture}
\newtheorem{defi}[thm]{Definition}
\newcommand{\real}{\mathbb{R}}
\DeclareMathOperator*{\comp}{\mathbb{C}}
\DeclareMathOperator*{\nat}{\mathbb{N}}
\newcommand{\trr}{\triangleright}
\newcommand{\iu}{\mathcal{IU}}
\newcommand{\id}{\mathcal{ID}}
\newcommand{\nn}{\mathbb{N}}
\newcommand{\E}{\mathbb{E}}
\newcommand{\R}{\mathbb{R}}
\newcommand{\Pb}{\mathbb{P}}
\newcommand{\Mod}[1]{\ (\mathrm{mod}\ #1)}
\begin{document}
	\title [ Trees are hypoenergetic] {Barabasi-albert trees are hypoenergetic}
	
	
	
	\author[Arizmendi]{Octavio Arizmendi}
	\author[Dominguez]{Emilio Dominguez}

	\keywords{Graph Energy, Non Commutative Probability, Graphs Spectra, Inequalities}
	\subjclass[2010]{05C50,05C80}
	\address{Centro de Investigaci{\'o}n en Matem{\'a}ticas. Guanajuato, M\'exico}
	\email{octavius@cimat.mx}
	
	\address{Universidad Aut\'onoma de Sinaloa. Sinaloa, M\'exico}
	\email{jedguez@gmail.com}
	\keywords{Barabasi Albert tree, Graph Energy}
	
	\maketitle
	\begin{abstract}
		We prove that graphs following the model of Barabasi-Albert tree with $n$ vertices are hypoenergetic in the large $n$ limit. 
	\end{abstract}

	\section{Introduction and Statements of Results}
	
	The {\it graph energy} is a graph invariant that was defined by I. Gutman \cite{Gut2} from his studies of mathematical chemistry. 

The energy of a graph $G$, denoted by $\mathcal{E}(G)$, is defined as the sum of the absolute values of the eigenvalues of the adjacency matrix $A = A(G)$, i.e.,
	\begin{equation*}
	\mathcal{E}(G) = \sum_{i=1}^{n}|\lambda_{i}|.
	\end{equation*}
In other words, the energy of a graph is the trace or nuclear norm of its adjacency matrix.

Many results on inequalities for $\mathcal{E}(G)$ have been established and there are many examples of deterministic graphs whose energy is known. An excellent introduction to the theory of graph energy can be found in the monograph \cite{LiShiGu}.
However, very few research papers have studied the properties of energy on random graphs. 

To the best of our knowledge the only papers considering the energy of a random graph are due Nikiforov \cite{Nik,Nik2}. In these papers, Nikiforov describes precisely the asymptotic behavior, as the size of the graph goes to infinity, of the energy of two families of random graphs: Erd\"os-R\'enyi graph with fixed $p$, \cite{Nik}, and uniform $d$-regular graphs, \cite{Nik2}. Both of these results rely on the fact that the explicit limiting distributions of the adjacency matrix of these graphs are well known.  

This paper contributes to the theory of graph energy in two ways. Firstly, by the use of Ky Fan's inequality we propose a new inequality for the energy of a tree in Theorem \ref{T2}, which in some cases dramatically improves known inequalities such as McClelland's, Koolean and Moulton's and their variants. Secondly,  we use the previous inequality and modify its proof to show that Barabasi-Albet trees are asymptotically hypoenergetic, (i.e. the energy is smaller than the number of vertices), as the size of the matrix goes to infinity. We state this second result as our main theorem. 

\begin{thm}
With probability tending to $1$, as the size tends to infinity, the energy of a typical tree chosen with the preferential model of Barabasi-Albert is smaller than the size of the tree. \label{T1}
\end{thm}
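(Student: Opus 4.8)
The plan is to reduce Theorem \ref{T1} to a computation on the random degree sequence of the tree, using the spectral inequality behind Theorem \ref{T2}. Recall that $\mathcal{E}(G)$ is the nuclear norm of the adjacency matrix and that this norm is subadditive, $\mathcal{E}\!\left(\sum_j A_j\right)\le\sum_j\mathcal{E}(A_j)$, by Ky Fan's inequality. Fixing the initial vertex of the process as the root, let $A_v$ carry only the edges from $v$ to its children; then $A=\sum_v A_v$, each $A_v$ is the adjacency matrix of a star $K_{1,c_v}$ with $c_v$ the number of children of $v$, and $\mathcal{E}(K_{1,c})=2\sqrt{c}$. Since $c_v=d_v-1$ for every non‑root vertex ($d_v$ its degree), this yields the bound of Theorem \ref{T2},
\begin{equation*}
\mathcal{E}(T)\ \le\ 2\sum_{v}\sqrt{c_v}\ =\ 2\sqrt{d_{\mathrm{root}}}+2\sum_{v\ne\mathrm{root}}\sqrt{d_v-1},
\end{equation*}
and it therefore suffices to show that, with probability tending to $1$, $\tfrac1n\sum_v\sqrt{c_v}<\tfrac12$.

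Next I would pass to the limiting degree distribution. It is classical that in the Barabási–Albert tree the proportion of vertices of degree $k$ converges to $p_k=\frac{4}{k(k+1)(k+2)}$, and, since leaves contribute $\sqrt{0}=0$ and the root contributes only $O(n^{1/4})$, one expects
\begin{equation*}
\frac1n\sum_v\sqrt{c_v}\ \longrightarrow\ \mu:=\sum_{k\ge 1}p_k\sqrt{k-1}=4\sum_{k\ge 2}\frac{\sqrt{k-1}}{k(k+1)(k+2)} .
\end{equation*}
Because $\sqrt{k-1}$ is unbounded this convergence of means is not automatic; I would justify it by truncating at a degree $K$ and controlling the tail via $p_k=O(k^{-3})$ together with the fact that the maximal degree is $O(\sqrt{n}\,)$ with high probability, so that the vertices of degree $>K$ contribute $o(n)$. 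For the upgrade to a high‑probability statement I would use the well‑known concentration of the degree counts $N_k$ of the Barabási–Albert tree around $n p_k$ (the Azuma / Pólya‑urn analysis of the growth process), which together with the tail bound gives $\tfrac1n\sum_k N_k\sqrt{k-1}\to\mu$ in probability. The theorem then follows provided $\mu<\tfrac12$.

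The main obstacle is exactly this last inequality: a direct evaluation gives $\mu\approx 0.50$, which sits right on the boundary, so the plain parent–child decomposition is not by itself conclusive. This is where I would \emph{modify} the proof of Theorem \ref{T2}: instead of always assigning an edge to the parent, I reassign the unique downward edge of each degree‑$2$ internal vertex to its child whenever that child is itself internal. Such a move empties a vertex (a saving of $2$ in energy) at a cost of only $2(\sqrt{s+1}-\sqrt{s})\le 2(\sqrt{2}-1)<2$, hence strictly lowers the bound; carrying out these moves along an independent set of degree‑$2$ vertices pushes the limiting constant strictly below $\tfrac12$. Quantifying the gain requires the joint law of the degree of a vertex and the degrees of its children in the Barabási–Albert tree—in particular the asymptotic fraction of degree‑$2$ vertices whose child is not a leaf—and this correlation estimate, rather than the spectral inequality or the concentration step, is the delicate point on which the whole argument rests.
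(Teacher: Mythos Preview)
Your proposal is correct and follows the paper's route essentially step for step: the same star decomposition yielding Theorem~\ref{T2}, the same limiting constant $2\mu\approx 1.006$ via the degree law $p_k=\tfrac{4}{k(k+1)(k+2)}$ (with the tail handled by truncation plus Cauchy--Schwarz against the total degree $\sum_k kN_k=2n-2$), and the same idea of modifying the partition at degree-$2$ vertices to push the bound below $1$. The paper's specific modification is precisely the $s=1$ case of yours---it merges the two one-edge stars at a pair of \emph{adjacent} degree-$2$ vertices into a single $S_3$, saving $4-2\sqrt2$---and the ``delicate correlation estimate'' you flag is supplied directly by the Bollob\'as--Riordan--Spencer--Tusn\'ady formula for the limiting proportion $n_{k,l}$ of edges with endpoint degrees $k$ and $l$: in particular $n_{2,2}=\tfrac{1}{45}+o(1)$, which guarantees at least $n/135$ disjoint merges and a final bound of about $0.997\,n$.
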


Let us mention that the results of \cite{Nik}, show that almost all graphs on $n$ vertices have energy of order $(\frac{4}{
3\pi}+o(n))
n^3/2$, and in particular are hyperenergetic (i.e their energy is larger than the energy of the complete graph). Theorem \ref{T1} constrasts with this and shows that there are plenty of trees of size $n$, with energy smaller than $n$. This should not be as surprising since trees or tree-like graphs are very rare when considering uniform random graphs. We believe that in order to compare Erd\"os-R\'enyi graphs with random trees one needs to set $p=\frac{2}{n}$, so that at least the expected number of edges coincides. We try to build some intuition in the last section of the paper.

Apart from this introduction, the paper contains 4 more sections. Section 2 gives the necessary preliminaries. Section 3 proves our new inequality for the energy of a tree. The main theorem of the paper is proved in Section 4. We present some simulations, leading to open problems and conjectures in the final Section 5.

\section{Preliminaries}

\subsection{Notation on graphs}

We consider finite simple undirected graphs. For definitions used here we refer to Diestel \cite{Diestel}.  A graph is called a tree if it is connected and has no cycles.

For a graph $G=(V,E)$ and a vertex, $v\in V$, the closed neigborhood of $v$ is the set $N(v)=\{v\}\cup\{w\in V| w\sim v\}$, and the degree of $v$, denoted by $d(v)=deg(v) $ is the number of neighbors of $v$, i.e.  $d(v)=|N(v)|-1$.

A tree is called a star graph if it consists of one vertex which is joined to $n-1$ vertices. We denote the star graph with $n$ vertices by $S_n$, which is also called the $n$-star.

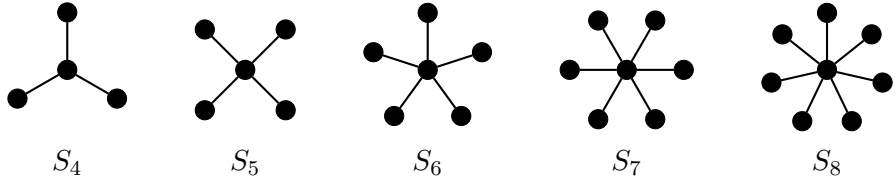
\begin{figure}[h] \label{stars}
\begin{tikzpicture}
[mystyle/.style={scale=0.7, draw,shape=circle,fill=black}]
\def\ngon{3}
\node[regular polygon,regular polygon sides=\ngon,minimum size=1.5cm] (p) {};
\foreach\x in {1,...,\ngon}{\node[mystyle] (p\x) at (p.corner \x){};}
\node[mystyle] (p0) at (0,0) {};
\foreach\x in {1,...,\ngon}
{
 \draw[thick] (p0) -- (p\x);
}
  \node [label=below:$S_{4}$] (*) at (0,-0.8) {};
 \end{tikzpicture}
  \qquad
\begin{tikzpicture}
[mystyle/.style={scale=0.7, draw,shape=circle,fill=black}]
\def\ngon{4}
\node[regular polygon,regular polygon sides=\ngon,minimum size=1.5cm] (p) {};
\foreach\x in {1,...,\ngon}{\node[mystyle] (p\x) at (p.corner \x){};}
\node[mystyle] (p0) at (0,0) {};
\foreach\x in {1,...,\ngon}
{
 \draw[thick] (p0) -- (p\x);
}

  \node [label=below:$S_{5}$] (*) at (0,-0.8) {};
 \end{tikzpicture}
  \qquad
\begin{tikzpicture}
[mystyle/.style={scale=0.7, draw,shape=circle,fill=black}]
\def\ngon{5}
\node[regular polygon,regular polygon sides=\ngon,minimum size=1.5cm] (p) {};
\foreach\x in {1,...,\ngon}{\node[mystyle] (p\x) at (p.corner \x){};}
\node[mystyle] (p0) at (0,0) {};
\foreach\x in {1,...,\ngon}
{
 \draw[thick] (p0) -- (p\x);
}
  \node [label=below:$S_{6}$] (*) at (0,-0.8) {};
 \end{tikzpicture}
  \qquad
\begin{tikzpicture}
[mystyle/.style={scale=0.7, draw,shape=circle,fill=black}]
\def\ngon{6}
\node[regular polygon,regular polygon sides=\ngon,minimum size=1.5cm] (p) {};
\foreach\x in {1,...,\ngon}{\node[mystyle] (p\x) at (p.corner \x){};}
\node[mystyle] (p0) at (0,0) {};
\foreach\x in {1,...,\ngon}
{
 \draw[thick] (p0) -- (p\x);
}
  \node [label=below:$S_{7}$] (*) at (0,-0.8) {};
 \end{tikzpicture}
  \qquad
  \begin{tikzpicture}
[mystyle/.style={scale=0.7, draw,shape=circle,fill=black}]
\def\ngon{7}
\node[regular polygon,regular polygon sides=\ngon,minimum size=1.5cm] (p) {};
\foreach\x in {1,...,\ngon}{\node[mystyle] (p\x) at (p.corner \x){};}
\node[mystyle] (p0) at (0,0) {};
\foreach\x in {1,...,\ngon}
{
 \draw[thick] (p0) -- (p\x);
}
  \node [label=below:$S_{8}$] (*) at (0,-0.8) {};
 \end{tikzpicture}
  \caption{Stars $S_4$, $S_5$, $S_6$, $S_7$ and $S_8$}
\end{figure}

A tree is called a \emph{path} if it consists of $n$ vertices, $v_1,\dots,v_n$, such that $v_i\sim v _j$. We denote the path of size $n$ by $P_n$.

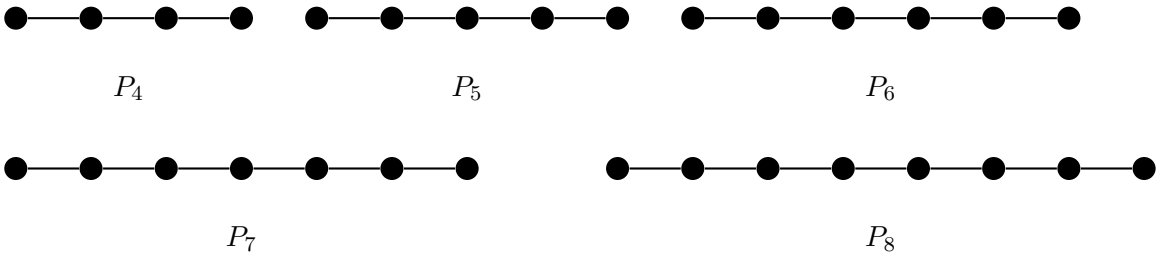
\begin{figure}
[h]\centering 
  \begin{tikzpicture}

\node[style={circle,fill=black,inner sep=2pt]}] (27) at (1,7) {$.$};
\node[style={circle,fill=black,inner sep=2pt]}] (37) at (2,7) {$.$};
\node[style={circle,fill=black,inner sep=2pt]}] (47) at (3,7) {$.$};
\node[style={circle,fill=black,inner sep=2pt]}] (57) at (4,7) {$.$};

\path[draw,thick](27) edge node {} (37);
\path[draw,thick](37) edge node {} (47);
\path[draw,thick](47) edge node {} (57);

  \node [label=below:$P_{4}$] (*) at (2.5,6.5) {};

\node[style={circle,fill=black,inner sep=2pt]}] (26) at (5,7) {$.$};
\node[style={circle,fill=black,inner sep=2pt]}] (36) at (6,7) {$.$};
\node[style={circle,fill=black,inner sep=2pt]}] (46) at (7,7) {$.$};
\node[style={circle,fill=black,inner sep=2pt]}] (56) at (8,7) {$.$};
\node[style={circle,fill=black,inner sep=2pt]}] (66) at (9,7) {$.$};

\path[draw,thick](26) edge node {} (36);
\path[draw,thick](36) edge node {} (46);
\path[draw,thick](46) edge node {} (56);
\path[draw,thick](56) edge node {} (66);

  \node [label=below:$P_{5}$] (*) at (7,6.5) {};

\node[style={circle,fill=black,inner sep=2pt]}] (25) at (10,7) {$.$};
\node[style={circle,fill=black,inner sep=2pt]}] (35) at (11,7) {$.$};
\node[style={circle,fill=black,inner sep=2pt]}] (45) at (12,7) {$.$};
\node[style={circle,fill=black,inner sep=2pt]}] (55) at (13,7) {$.$};
\node[style={circle,fill=black,inner sep=2pt]}] (65) at (14,7) {$.$};
\node[style={circle,fill=black,inner sep=2pt]}] (75) at (15,7) {$.$};

\path[draw,thick](25) edge node {} (35);
\path[draw,thick](35) edge node {} (45);
\path[draw,thick](45) edge node {} (55);
\path[draw,thick](55) edge node {} (65);
\path[draw,thick](65) edge node {} (75);

  \node [label=below:$P_{6}$] (*) at (12.5,6.5) {};

\node[style={circle,fill=black,inner sep=2pt]}] (34) at (1,5) {$.$};
\node[style={circle,fill=black,inner sep=2pt]}] (44) at (2,5) {$.$};
\node[style={circle,fill=black,inner sep=2pt]}] (54) at (3,5) {$.$};
\node[style={circle,fill=black,inner sep=2pt]}] (64) at (4,5) {$.$};
\node[style={circle,fill=black,inner sep=2pt]}] (74) at (5,5) {$.$};
\node[style={circle,fill=black,inner sep=2pt]}] (84) at (6,5) {$.$};
\node[style={circle,fill=black,inner sep=2pt]}] (94) at (7,5) {$.$};
\path[draw,thick](34) edge node {} (44);
\path[draw,thick](44) edge node {} (54);
\path[draw,thick](54) edge node {} (64);
\path[draw,thick](64) edge node {} (74);
\path[draw,thick](74) edge node {} (84);
\path[draw,thick](84) edge node {} (94);

  \node [label=below:$P_{7}$] (*) at (4,4.5) {};

\node[style={circle,fill=black,inner sep=2pt]}] (32) at (9,5) {$.$};
\node[style={circle,fill=black,inner sep=2pt]}] (42) at (10,5) {$.$};
\node[style={circle,fill=black,inner sep=2pt]}] (52) at (11,5) {$.$};
\node[style={circle,fill=black,inner sep=2pt]}] (62) at (12,5) {$.$};
\node[style={circle,fill=black,inner sep=2pt]}] (72) at (13,5) {$.$};
\node[style={circle,fill=black,inner sep=2pt]}] (82) at (14,5) {$.$};
\node[style={circle,fill=black,inner sep=2pt]}] (92) at (15,5) {$.$};
\node[style={circle,fill=black,inner sep=2pt]}] (102) at (16,5) {$.$};

\path[draw,thick](32) edge node {} (42);
\path[draw,thick](42) edge node {} (52);
\path[draw,thick](52) edge node {} (62);
\path[draw,thick](62) edge node {} (72);
\path[draw,thick](72) edge node {} (82);
\path[draw,thick](82) edge node {} (92);
\path[draw,thick](92) edge node {} (102);
  \node [label=below:$P_{8}$] (*) at (12.5,4.5) {};

 \end{tikzpicture}
  \caption{Paths $P_4$, $P_5$, $P_6$, $P_7$ and $P_8$}
\end{figure}

\subsection{Inequalities for the Energy of a Graph}

In this section we remind the reader of some inequalities for the energy of a graph. 

The first inequality is the well known McClelland's inequality \cite{McC} which says that
$$\mathcal{E}G)\leq \sqrt{2mn}. $$

Two other inequalities due to Koolen and Moulton are the following. First, in \cite{KM} they prove that for any graph with $n$ vertices and $m$ edges we have
\begin{equation} \label{KM1}\mathcal{E}(G)\leq\frac{2m}{n}+\sqrt{(n-1)(2m-(\frac{2m}{n})^2)}.\end{equation}
If, moreover one knows that $G$ is bipartite graph, from \cite{KM2}, one has 
\begin{equation} \label{KM2} \mathcal{E}(G)\leq2\frac{2m}{n}+\sqrt{(n-2)(2m-2(\frac{2m}{n})^2)}.\end{equation}

The following theorem of Ky-Fan will be very useful, which in particular shows that the graph energy satisfies a triangle inequality.
 
\begin{thm}\label{KF} Let $A, B$, and $C$ be square selfadjoint matrices of order $n$, such that $A+B=C.$
Then
$$\sum^n_i |\lambda_i(A)|+\sum^n_i |\lambda_i(B)|\geq\sum^n_i |\lambda_i(C)|$$
\end{thm}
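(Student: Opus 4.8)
The plan is to recognize the quantity $\sum_{i}|\lambda_i(A)|$ as the trace norm (nuclear norm) $\|A\|_1=\mathrm{Tr}\,|A|$ of the selfadjoint matrix $A$, so that the assertion is precisely the triangle inequality $\|A+B\|_1\le\|A\|_1+\|B\|_1$. The cleanest route, in my view, is to first produce a \emph{dual} (variational) description of the trace norm as a maximum of a linear functional, and then deduce subadditivity for free by splitting that maximum.

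First I would prove the variational formula
\[
\sum_{i=1}^n |\lambda_i(A)| \;=\; \max\bigl\{\,\mathrm{Tr}(UA)\ :\ U=U^*,\ -I\preceq U\preceq I\,\bigr\}.
\]
To see that the right-hand side is at least the left-hand side, I would exhibit an explicit maximizer. Writing the spectral decomposition $A=A_+-A_-$ with $A_+,A_-\succeq 0$ the positive and negative parts, let $P$ and $Q$ be the orthogonal projections onto the positive and negative eigenspaces of $A$ and set $U=P-Q$. Then $U$ is selfadjoint with eigenvalues in $\{-1,0,1\}$, so $-I\preceq U\preceq I$, and a direct computation gives $\mathrm{Tr}(UA)=\mathrm{Tr}(A_+)+\mathrm{Tr}(A_-)=\mathrm{Tr}\,|A|=\sum_i|\lambda_i(A)|$.

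For the reverse inequality, take any admissible $U$ and write $\mathrm{Tr}(UA)=\mathrm{Tr}(UA_+)-\mathrm{Tr}(UA_-)$. Since $I-U\succeq 0$ and $A_+\succeq 0$, the trace of their product is nonnegative, so $\mathrm{Tr}(UA_+)\le\mathrm{Tr}(A_+)$; similarly $I+U\succeq 0$ and $A_-\succeq 0$ give $\mathrm{Tr}(UA_-)\ge-\mathrm{Tr}(A_-)$. Combining, $\mathrm{Tr}(UA)\le\mathrm{Tr}(A_+)+\mathrm{Tr}(A_-)=\sum_i|\lambda_i(A)|$, which proves the formula. The only nontrivial input here is the elementary fact that $\mathrm{Tr}(XY)\ge 0$ whenever $X,Y\succeq 0$; this is really the only obstacle, and it follows at once from $\mathrm{Tr}(XY)=\mathrm{Tr}(X^{1/2}YX^{1/2})$ with $X^{1/2}YX^{1/2}\succeq 0$.

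With the variational formula in hand the theorem is immediate. Since $A+B=C$ and the maximum in each case ranges over the same set of admissible $U$,
\[
\sum_i|\lambda_i(C)| = \max_U \mathrm{Tr}(UC) = \max_U\bigl[\mathrm{Tr}(UA)+\mathrm{Tr}(UB)\bigr] \le \max_U \mathrm{Tr}(UA) + \max_U \mathrm{Tr}(UB) = \sum_i|\lambda_i(A)| + \sum_i|\lambda_i(B)|,
\]
which is the desired inequality. An alternative, more hands-on route would invoke Ky Fan's maximum principle that $\sum_{i=1}^k\lambda_i^{\downarrow}(A+B)\le\sum_{i=1}^k\lambda_i^{\downarrow}(A)+\sum_{i=1}^k\lambda_i^{\downarrow}(B)$ for partial sums of decreasingly ordered eigenvalues, apply it to both $C$ and $-C$, and use that $\sum_i|\lambda_i(A)|=\max_k\sum_{i=1}^k\lambda_i^{\downarrow}(A)+\max_k\sum_{i=1}^k\lambda_i^{\downarrow}(-A)$; but the dual-norm argument above is shorter and self-contained.
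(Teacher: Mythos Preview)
Your argument is correct: the variational characterization
\[
\sum_{i=1}^n |\lambda_i(A)| \;=\; \max\bigl\{\,\mathrm{Tr}(UA)\ :\ U=U^*,\ -I\preceq U\preceq I\,\bigr\}
\]
is established cleanly in both directions, and the subadditivity of the trace norm then drops out by splitting the maximum. The only technical ingredient, $\mathrm{Tr}(XY)\ge 0$ for $X,Y\succeq 0$, is handled correctly via $\mathrm{Tr}(X^{1/2}YX^{1/2})$.

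As for comparison: the paper does \emph{not} prove this theorem. It is quoted as the classical Ky~Fan inequality and used as a black box (to derive the subgraph version and the Arizmendi--Ju\'arez bound). So there is no ``paper's own proof'' to match against; your write-up in fact supplies what the paper omits. The dual-norm route you chose is the standard modern way to see this inequality, and it has the advantage of making transparent why equality can fail to be attained simultaneously for $A$ and $B$ (different optimal $U$'s). Your alternative sketch via Ky~Fan's maximum principle for partial eigenvalue sums is also valid but, as you note, less direct here.
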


We state a version of Ky-Fan's inequality for subgraphs.
\begin{thm}[Ky-Fan's inequality for subgraphs] \label{KyFanthm}
		Let $G$ be a graph and $H_1,H_2,...,H_n$ be subgraphs of $G$ whose adjacency matrices satisfy the condition $A(G)=A(H_1)+A(H_2)+\cdots+A(H_n)$. Then
	\begin{equation}\label{kyfanine} \mathcal{E}(G)\leq  \mathcal{E}(H_1)+\mathcal{E}(H_2)+\cdots+\mathcal{E}(H_n)\end{equation}
\end{thm}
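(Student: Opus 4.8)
The plan is to prove this by a short induction on $n$ that reduces everything to the two-matrix statement of Theorem \ref{KF}. First I would record the setup: since $G$ is a finite simple undirected graph, every adjacency matrix $A(H_i)$ is real symmetric, hence self-adjoint. To make sense of the hypothesis $A(G)=A(H_1)+\cdots+A(H_n)$ as an equation between matrices of a common order, I would read each $A(H_i)$ as an $N\times N$ matrix on the full vertex set $V(G)$, with $N=|V(G)|$, padded by zero rows and columns for the vertices absent from $H_i$. Padding by zeros introduces only zero eigenvalues and so leaves $\sum_j|\lambda_j|$ unchanged; therefore this $N\times N$ reading of $A(H_i)$ has the same energy $\mathcal{E}(H_i)$ as the subgraph itself, and the displayed matrix identity holds literally.

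For the induction, the base case $n=1$ is an equality, and the case $n=2$ is precisely Theorem \ref{KF} rewritten in the energy notation $\mathcal{E}(A)+\mathcal{E}(B)\ge\mathcal{E}(A+B)$. For the inductive step, assuming the inequality for any $n-1$ self-adjoint summands, I would split
$$A(G)=A(H_1)+B, \qquad B:=\sum_{i=2}^{n}A(H_i).$$
The matrix $B$ is self-adjoint, being a sum of self-adjoint matrices, so Theorem \ref{KF} applies to the pair $(A(H_1),B)$ and gives $\mathcal{E}(G)\le\mathcal{E}(H_1)+\mathcal{E}(B)$, where $\mathcal{E}(B):=\sum_j|\lambda_j(B)|$. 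Applying the induction hypothesis to $A(H_2),\dots,A(H_n)$ yields $\mathcal{E}(B)\le\sum_{i=2}^{n}\mathcal{E}(H_i)$, and combining the two estimates closes the induction and establishes \eqref{kyfanine}.

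The one point that deserves care — and it is really bookkeeping rather than a genuine obstacle — is that the intermediate partial sums $B$ are \emph{not} adjacency matrices of subgraphs of $G$; they are merely self-adjoint matrices. This is why the natural object to induct on is the subadditivity of the functional $\mathcal{E}(M)=\sum_j|\lambda_j(M)|$ over arbitrary self-adjoint $M$, equivalently the triangle inequality for the nuclear norm restricted to Hermitian matrices. The graph statement then follows at once by specializing $M_i=A(H_i)$, and no spectral computation beyond the repeated invocation of Theorem \ref{KF} is required.
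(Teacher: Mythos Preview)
Your proof is correct and matches the paper's (implicit) approach: the paper states Theorem~\ref{KyFanthm} as a direct consequence of the two-matrix Ky--Fan inequality (Theorem~\ref{KF}) without spelling out the details, and your induction on $n$ together with the zero-padding remark is exactly the routine verification needed to pass from two summands to $n$.
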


\begin{defi}
Let $G=(V,E)$ be a simple graph. A \emph{graph partition} is a collection $\mathcal{G}=\{G_i=(V_i,E_i)\}_{i\in I}$ of subgraphs of $G$ with the following properties.

\begin{enumerate}
\item For any edge $e\in E$, there is $G_i\in \mathcal G$ such that $e\in E_i$ 
\item Any $G_i,G_j\in \mathcal G $ with $G_i\neq G_j$ do not share edges. i.e $E_i\cap E_j$ is empty.
\end{enumerate}
\end{defi}

Note that graph partitions satisfy the hypotheses of the  Ky-Fan's inequality. The following lemma is a direct consequence of this.
\begin{lem}
Let $\mathcal{G}$ be a graph partition of $G$ then
$$\mathcal{E} (G)\leq \sum_{H\in \mathcal{G}} \mathcal{E} (H) $$
\end{lem}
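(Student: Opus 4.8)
The plan is to verify that any graph partition satisfies the algebraic hypothesis of Ky-Fan's inequality for subgraphs (Theorem \ref{KyFanthm}) and then invoke that theorem directly. Throughout I would regard the adjacency matrix of each subgraph $H=(V_H,E_H)\in\mathcal{G}$ as an $n\times n$ matrix indexed by the full vertex set $V$ of $G$, setting to zero the entries corresponding to vertices outside $V_H$; this is the natural convention under which $A(G)$ and all the $A(H)$ live in the same matrix space and can be added.

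First I would reduce to a finite sum. Since $G$ is a finite simple graph, only finitely many members of $\mathcal{G}$ can contain an edge; any subgraph with empty edge set has $A(H)=0$ and $\mathcal{E}(H)=0$, so it contributes nothing to either side of the claimed inequality and may be discarded. After this reduction $\mathcal{G}=\{H_1,\dots,H_k\}$ is a finite family, matching the setting of Theorem \ref{KyFanthm}.

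Next I would establish the matrix identity $A(G)=\sum_{H\in\mathcal{G}}A(H)$ by comparing entries. Fix vertices $u,v\in V$. If $uv\in E$, then by property (1) of a graph partition there is some $H_i$ with $uv\in E_{H_i}$, and by property (2) this $H_i$ is unique, so exactly one summand contributes a $1$ in position $(u,v)$ and $\sum_{H}A(H)_{uv}=1=A(G)_{uv}$. If $uv\notin E$, then no subgraph of $G$ contains the edge $uv$, so every summand has a zero in position $(u,v)$ and $\sum_{H}A(H)_{uv}=0=A(G)_{uv}$. Hence the identity holds entrywise.

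Finally, the matrices $A(G)$ and $A(H_1),\dots,A(H_k)$ are real symmetric of order $n$ and satisfy $A(G)=A(H_1)+\cdots+A(H_k)$, which is precisely the hypothesis of Theorem \ref{KyFanthm}. Applying that theorem yields $\mathcal{E}(G)\le\sum_{H\in\mathcal{G}}\mathcal{E}(H)$, as required. The only point demanding care is the bookkeeping just described---padding each subgraph's adjacency matrix so that it acts on all of $V$, and checking that conditions (1) and (2) together force each edge to contribute exactly once---but no genuine difficulty arises, which is exactly why the statement is a direct consequence of the Ky-Fan inequality for subgraphs.
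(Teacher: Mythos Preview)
Your proposal is correct and follows exactly the route the paper takes: the paper simply remarks that graph partitions satisfy the hypothesis of Theorem~\ref{KyFanthm} and declares the lemma a direct consequence, while you spell out the entrywise verification of $A(G)=\sum_{H\in\mathcal{G}}A(H)$ that justifies this remark. There is nothing to add.
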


Finally,  we would like to mention the following inequality found by Arizmendi and Juarez \cite{ArJu} in their study of the \emph{energy of a vertex}.  We give a new simple proof by the use of  Ky-Fan's theorem, Theorem \ref{KF}.
\begin{lem}[Arizmendi \& Juarez \cite{ArJu}]
For a graph $G$ with vertices of degrees $d_1,\cdots,d_n$ 	
\begin{equation}\label{bound1} \mathcal{E}(G)\leq  \sum^n_{i=1} \sqrt{d_i}\end{equation}
\end{lem}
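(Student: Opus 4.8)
The plan is to realize $A(G)$ as a sum of scaled star matrices and then apply Ky-Fan's theorem, Theorem \ref{KF}. For each vertex $i$ of $G$, let $S_i$ denote the star subgraph of $G$ centered at $i$, that is, the graph on the vertex set $V$ consisting of the vertex $i$ together with all edges of $G$ incident to $i$ (all other vertices being isolated), and view its adjacency matrix $A(S_i)$ as an $n\times n$ symmetric matrix. The first step is to observe that every edge $\{i,j\}$ of $G$ is incident to exactly two vertices and therefore appears in exactly two of the stars, namely $S_i$ and $S_j$. Consequently
\begin{equation*}
\sum_{i=1}^n A(S_i) = 2A(G), \qquad\text{equivalently}\qquad A(G)=\sum_{i=1}^n \tfrac12 A(S_i).
\end{equation*}

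Next I would apply Ky-Fan's theorem, Theorem \ref{KF}, inductively to the $n$ selfadjoint matrices $\tfrac12 A(S_i)$, whose sum is $A(G)$. This yields
\begin{equation*}
\mathcal{E}(G)=\sum_k|\lambda_k(A(G))|\le \sum_{i=1}^n \sum_k \bigl|\lambda_k\bigl(\tfrac12 A(S_i)\bigr)\bigr| = \sum_{i=1}^n \mathcal{E}\bigl(\tfrac12 S_i\bigr).
\end{equation*}
It is important here that Theorem \ref{KF} is stated for arbitrary selfadjoint matrices, so it applies verbatim to the scaled matrices $\tfrac12 A(S_i)$, which are no longer $0$--$1$ matrices.

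It then remains to compute the energy of a single weighted star. A star with a center of degree $d$ has adjacency matrix of rank $2$, with nonzero eigenvalues $\pm\sqrt{d}$ and all remaining eigenvalues equal to $0$; hence its energy is $2\sqrt d$. Scaling by $\tfrac12$ scales every eigenvalue by $\tfrac12$, so $\mathcal{E}(\tfrac12 S_i)=\tfrac12\cdot 2\sqrt{d_i}=\sqrt{d_i}$ (and this holds trivially when $d_i=0$, since then $A(S_i)=0$). Substituting into the previous display gives $\mathcal{E}(G)\le\sum_{i=1}^n\sqrt{d_i}$, as claimed.

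Finally, a word on where the argument is delicate: the only point requiring care is the bookkeeping of the factor $2$. The double counting of each edge forces the weight $\tfrac12$ in the decomposition, and this is precisely what cancels the factor $2$ in the energy $2\sqrt d$ of a star; had one instead tried to \emph{partition} the edges into stars so that each edge is used exactly once, the out-degrees of some orientation would enter and the clean bound $\sum_i\sqrt{d_i}$ would be lost. I expect no genuine obstacle beyond verifying the spectrum of a star and confirming that padding each $S_i$ with isolated vertices up to size $n$ leaves its energy unchanged.
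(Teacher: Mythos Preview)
Your proof is correct and is essentially identical to the paper's: the paper defines the matrix $A^{(i)}$ with entries $1/2$ on the $i$-th row and column at positions corresponding to neighbors of $i$, which is precisely your $\tfrac12 A(S_i)$, observes that these sum to $A(G)$, computes the energy of each as $\sqrt{d_i}$ from the star spectrum, and applies Ky-Fan's theorem. Your write-up is in fact more explicit about the double-counting and the star eigenvalue computation than the paper's.
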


\begin{proof}
For each vertex $i$, consider the selfajoint matrix $A^{(i)}$ such that $A^{(i)}_{ji}=A^{(i)}_{ij}=1/2$ if $i\sim j$,  $A^{(i)}_{ji}=A^{(i)}_{ij}=0$ if $j\nsim i$ and $A^{(i)}_{lk}=0$ if $l,k\neq i$. 

Since $2A^{(i)}$ is the adjacency matrix of a $d_i$-star graph, $ \sum^n_{k=1}|\lambda_k(A^{(i)})|=\sqrt{d_i}$. Finally, since  $A^{(i)}_{ij}+A^{(j)}_{ij}=1$, one easily sees that $A(G)=\sum^n_{i=1} A^{(i)}$. We conclude by Ky-Fan's theorem.
\end{proof}

\subsection{Barabasi-Albert Model}

Various natural, social and technological systems are thought to have a scale free degree distribution: the proportion $P(d)$ of nodes with degree $d$ is proportional to $d^{-\gamma}$, at least asymptotically, for some $\gamma$ that does not depend on the size of the network. To address this, in \cite{ABo} Barabasi and Albert proposed a random graph model that grows with time and incorporates preferential attachment. That is, a new vertex attaches with higher probability to already existing vertices of higher degrees. The original construction of the model is as follows:\\

Starting with $m_0$ vertices, at each step we add a new vertex and $m(\leq m_0)$ edges, joining the new vertex to $m$ vertices already in the graph. These edges are randomly chosen in such a way that the probability for the new vertex to be connected to am already existing vertex $i$ is proportional to the degree of $d_i$, $$\Pi(d_i)=\frac{d_i}{\sum_j d_j}.$$  

We are concerned with the case where both $m_0$ and $m$ are equal to $1$. Here, the graph obtained by the process in every time step is a tree. A parameter $\alpha$ is added to the model in order to consider the case of non-linear dependency for the preferential attachment, as seen in Krapivsky et al. \cite{KRL}. In this case, the probability for a new vertex to be connected to a vertex $i$ of degree $d_i$ is now proportional to $d_i^\alpha$. That is to say, $\Pi(d_i)=d_i^\alpha/\sum_{j} d_j^\alpha$. 
	
\section{A basic bound for trees}\label{Bounds}
	
	One basic problem is to find the extremal values or good bounds for the energy within some special class of graphs and to characterize graphs from this class which reach this extremal values of the energy. In this section we present an upper bound for the energy of a tree, which can be derived rather simply from the theory, specifically from Ky Fan's inequality, but a priori not so natural. This bound turns out to be quite useful specially for trees with many vertices of degree $1$, which is where the Arizmendi-Juarez bound, \eqref{bound1}, seems to perform badly. This will be used in the next section when considering Barabasi-Albert trees.
	
	\begin{thm}\label{T2} Let $T$ be a tree with degrees $\Delta=d_1\geq....\geq d_n$, $n\geq3$. Then 
		\begin{equation} \label{bound2} \mathcal{E}(T)\leq  \sum^n_{i=2} 2\sqrt{d_i-1}+2\sqrt{\Delta}\leq \sum^n_{i=1} 2\sqrt{d_i-1}+1\end{equation}
	\end{thm}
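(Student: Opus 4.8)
The plan is to use the Ky-Fan subgraph inequality (Theorem~\ref{KyFanthm}) together with a clever decomposition of the tree into star graphs, mimicking and refining the decomposition used in the Arizmendi--Juarez bound~\eqref{bound1}. The key observation is that in a tree, the edge set can be organized so that each vertex of degree $d_i$ ``owns'' at most $d_i-1$ edges, rather than contributing the full $\sqrt{d_i}$ as in the cruder per-vertex decomposition.

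\begin{proof}[Proof sketch]
Root the tree $T$ at the vertex of maximal degree $\Delta = d_1$. This induces an orientation: every non-root vertex $v$ has a unique parent, and we assign to $v$ the star consisting of $v$ together with all of its \emph{children} (its neighbors farther from the root). For the root we assign the star consisting of the root together with all $\Delta$ of its neighbors. Since each edge of $T$ joins a vertex to its unique parent, each edge belongs to exactly one such star (namely the star centered at the parent), so these stars form a graph partition of $T$ in the sense of the Definition, and their adjacency matrices sum to $A(T)$.

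Now I count the sizes. The star centered at a non-root vertex $v$ has as many edges as $v$ has children, which is $d(v)-1$ (one neighbor, the parent, is excluded); hence it is a star $S_{d(v)}$ with $d(v)-1$ edges, contributing energy $2\sqrt{d(v)-1}$, using that the $k$-star $S_{k+1}$ has energy $2\sqrt{k}$ (its nonzero eigenvalues are $\pm\sqrt{k}$). The star centered at the root contributes $2\sqrt{\Delta}$. Applying Theorem~\ref{KyFanthm} to this partition yields
\begin{equation*}
\mathcal{E}(T) \leq 2\sqrt{\Delta} + \sum_{i=2}^{n} 2\sqrt{d_i - 1},
\end{equation*}
where the sum runs over the non-root vertices (relabeled so that $d_1 = \Delta$ is the root). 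This is the first inequality in~\eqref{bound2}.

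For the second inequality, it suffices to show $2\sqrt{\Delta} \leq 2\sqrt{\Delta - 1} + 1$, since adding the missing $i=1$ term $2\sqrt{d_1-1} = 2\sqrt{\Delta-1}$ to the right-hand side accounts for the difference. This reduces to the elementary estimate $2\sqrt{\Delta} - 2\sqrt{\Delta-1} \leq 1$, which holds for all $\Delta \geq 1$ because $2(\sqrt{\Delta}-\sqrt{\Delta-1}) = 2/(\sqrt{\Delta}+\sqrt{\Delta-1}) \leq 2/(1+0) $ is already too weak; sharper, for $\Delta \geq 1$ one has $\sqrt{\Delta}+\sqrt{\Delta-1}\geq 2\sqrt{\Delta-1}$ and a direct check at small $\Delta$ plus monotonicity of $\sqrt{\Delta}-\sqrt{\Delta-1}$ gives the bound. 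I expect the main subtlety to be precisely this last elementary comparison and the careful bookkeeping of the root term; the structural heart of the argument—that a tree admits a star partition in which each vertex is the center of exactly one star of size equal to its number of children—is the natural idea, and it is what makes the $d_i-1$ (rather than $d_i$) appear.
\end{proof}
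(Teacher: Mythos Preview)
Your proposal is correct and follows essentially the same argument as the paper: root $T$ at a vertex of maximum degree, take for each non-root vertex the star on it and its children (and for the root the full star $S_{\Delta+1}$), and apply Ky--Fan to this graph partition. For the second inequality the paper simply observes that, since $n\geq 3$ forces $\Delta\geq 2$, one has $\sqrt{\Delta}-\sqrt{\Delta-1}=1/(\sqrt{\Delta}+\sqrt{\Delta-1})\leq 1/(2\sqrt{\Delta-1})\leq 1/2$, which is the clean version of the estimate you were circling around.
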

	
	\begin{proof}
	 Let $v_0$ be a vertex with largest degree. We root the tree $T$ at $v_0$.  
For each vertex $v\neq v_o$, there is a unique path $v_0\tilde v_1\tilde \cdots \tilde v_k=v$ from $v_0$ to $v$. Let $a(v)=v_{k-1}$ be the last vertex in this path before arriving to $v$, which is sometimes called the parent or ancestor of $v$. Now, let $\hat N(v)=v\cup N(v)\setminus \{a(v)\}$. We denote by $H(v)$, the induced subgraph of $G$, with vertex set $\hat N(v)$. Notice that $H(v)$ is a star $S_{d(v)}$ so its energy is given by $2\sqrt{d(v)-1}$. We also denote by $H(v_0)$ the subgraph induced by $v_0\cup N(v_0)$, which is a star $S_\Delta$ with energy $2\sqrt{\Delta}$ 

	 The family $\{H(v)\}_{v\in V}$, is a graph partition of $T$. By the use of Ky-Fan's inequality  \eqref{kyfanine} we may deduce the first inequality.
		
	The second inequality holds since for $\Delta\geq2$, $\sqrt{\Delta}-\sqrt{\Delta-1}\leq\frac{1}{2\sqrt{\Delta-1}}\leq1/2.$		
	\end{proof}

\begin{exa} A tree is called a double star $S_{p,q}$ if it is obtained by joining the centers of two
stars $S_p$ and $S_q$ by an edge. The double star $S_{p,q}$ has $p+q$ vertices. Its characteristic polynomial is given by  $\chi_{p,q}(x)=x^n-(p+q-1)x^{n-2}+(p-1)(q-1)x^{n-4}$ and thus the $4$ non -zero eigenvalues of $S_{p,q}$ are 
$\pm\frac{1}{\sqrt{2}}\sqrt{p+q-1\pm\sqrt{(p+q+1)^2-4(pq+1)}}$.  Hence we can easily calculate the energy: $$\mathcal{E}(S_{p,q})=\sqrt{2}\left(\sqrt{p+q-1+\sqrt{(p+q+1)^2-4(pq+1)}}+\sqrt{p+q-1-\sqrt{(p+q+1)^2-4(pq+1)}}\right)$$

In order to see the improvement of our bound, the following table compares the true values of the energy of the double star $S_{p,q}$, for $p=5$ and different values of $q$,  with the bounds from McClelland Inequality, the bound from Arizmendi-Juarez (A-J, Eq.\eqref{bound1}), Koolen and Moulton Inequalities (K-M 1, Eq.\eqref{KM1} and K-M 2, Eq. \eqref{KM2}) and the above bound Eq. \eqref{bound2}.

\begin{center}
	\centering
	\begin{tabular}{|c|c|c|c|c|c|c|} 
		\hline
		\multicolumn{1}{|c|}{$ p=5$ } & \multicolumn{1}{c|}{Energy} & \multicolumn{1}{c|}{\textbf{Thm. \ref{T2}} } & \multicolumn{1}{c|}{K-M 1} & \multicolumn{1}{c|}{K-M 2} & \multicolumn{1}{c|}{A-J} & \multicolumn{1}{c|}{McClelland}  \\ 
		\hline
		$q=1$                        & 4.472                       & 4.472                                                                   & 7.676             &  7.550               & 6.                                    & 7.745                            \\
		$q=2$                        & 6.324                       & 6.472                                                                   & 9.088             &  8.961               & 8.                                    & 9.165                            \\
		$q=3$                        & 7.115                       & 7.3                                                                     & 10.500            &  10.374               & 9.414                                 & 10.583                           \\
		$ q=4$                       & 7.727                       & 7.936                                                                   & 11.913            &  11.787               & 10.732                                & 12.                              \\
		$ q=5$                       & 8.246                       & 8.472                                                                   & 13.326            &  13.200               & 12.                                   & 13.416                           \\
		$q=6$                        & 8.705                       & 8.944                                                                   & 14.739            &  14.613               & 13.236                                & 14.832                           \\
		$ q=7$                       & 9.120                       & 9.371                                                                   & 16.152            &  16.027               & 14.449                                & 16.248                           \\
		$ q=8$                       & 9.504                       & 9.763                                                                   & 17.566            &  17.441               & 15.645                                & 17.663                           \\
		$ q=9$                       & 9.861                       & 10.129                                                                  & 18.979            &  18.854               & 16.828                                & 19.078                           \\
		$ q=10$                      & 10.198                      & 10.47                                                                   & 20.393            &  20.268               & 18.                                   & 20.493                           \\
		\hline
	\end{tabular}
\end{center}

\end{exa}

\begin{exa}
Similar as for the double star we compare the different inequalites for the path of size $n$, $P_n$, with the energy given by,
$$\mathcal{E}(P_{2n})= \frac{2}{\sin(\pi /(4n+2)}-2 , \qquad  \mathcal{E}(P_{2n+1})= \frac{2\cos(\pi /(4n+4)}{\sin(\pi /(4n+4)}-2. $$
\begin{center}
	\centering
	\begin{tabular}{|c|c|c|c|c|c|c|} 
		\hline
		\multicolumn{1}{|c|}{  } & \multicolumn{1}{c|}{Energy} & \multicolumn{1}{c|}{\textbf{Thm. \ref{T2}} } & \multicolumn{1}{c|}{K-M 1} & \multicolumn{1}{c|}{K-M 2} & \multicolumn{1}{c|}{A-J} & \multicolumn{1}{c|}{McClelland}  \\ 
		\hline
		$n=2$                        & 2.              & 2.                                                                  &2.             & 2.               & 2.                                    & 2.                           \\
		$n=3$                        & 2.828                       & 2.828                                                                     & 3.441            & 3.333             & 3.414                               & 3.464                           \\
		$ n=4$                       & 4.472                       & 4.828                                                                   & 4.854            &  5.732               & 4.828                                 & 4.898                              \\
		$ n=5$                       & 5.464                       & 6.828                                                                   & 6.264            &  6.139            & 6.242                                   & 6.324                           \\
		$n=6$                        & 6.988                      & 8.828                                                                    & 7.676             &  7.550               &  7.656                                & 7.745                            \\
		$ n=7$                       &   8.055                     & 10.828                                                                   & 9.088            &  8.961               & 9.071                               & 9.165                          \\
		$ n=8$                       & 9.517                      & 12.828                                                                   & 10.500            &  10.374               & 10.485                                & 10.583                           \\
		$ n=9$                       & 10.627                       & 14.828                                                                  & 11.913            &  11.787               & 11.899                                & 12.                          \\
		$ n=10$                      & 12.053                   & 16.828                                                                   & 13.326           &  13.200               & 13.313                                   & 13.416                           \\
		\hline
	\end{tabular}
\end{center}

\end{exa}

As one can see from the above example, Theorem 3.1 performs badly, compared with the rest of the inequalities in cases where there are many vertices of degree $2$ joined by an edge. In this case, compared with the bound \eqref{bound1}, for each pair we get a contribution of $2$ instead of $\sqrt{2}$. This observation will be useful in the next section in order to improve the bound \eqref{bound2}, when proving Theorem \ref{T1}.

\section{An upper bound for BA trees}

In this section we prove the main theorem, Theorem \ref{T1}. The proof consists of modifying the partition used in the proof of Theorem \ref{T2} by analizing more carefully vertices of degree 2. Before doing that, we estimate what Theorem \ref{T2} says about Barabasi-Albert trees.
\subsection{A first bound}	
Let $T$ be a tree of order $n$ that follows the model of Barabasi-Albert with parameter $\alpha = 1$.

For each $d\geq1$, we denote by $n_d$ the amount of vertices of degree $d$ in $T$. Before going into details, we notice that the total number of vertices may be calculated by
\begin{equation}\label{1}\sum_{d=1}^n n_d=n,\end{equation}
while the sum of degrees may be calculated by 
\begin{equation}\label{2}\sum_{d=1}^n n_d \cdot d=2n-2.\end{equation}
	
Now, in the case of a Barabasi-Albert tree, if we denote by
$$ \alpha_{d} = \frac {4} {(d) (d + 1) (d + 2)}, $$ 
as showed by Bollob\'as et al.  \cite{BRST}, for every fixed $d$, with probability tending to $1$ as $n\to\infty$, we have
$$ \frac {n_d} {n}=\alpha_ {d}+o(1).$$
	
Let $\Delta=d_1\geq....\geq d_n$ be the degrees of the vertices in $T$. We look at the sum 
$$S=\sum_{i=1}^n 2\sqrt{d_i-1}=2\sum_{d=1}^{n}n_d\cdot\sqrt{d-1}.$$ 
Using that for any fixed $m \in \mathbb {N} $
$$\sum^m_{d=1}{\frac{4}{(d)(d+1)(d+2)}}=2\sum^m_{d=1}{ \left( \frac{1}{d}+\frac{1}{d+2}-\frac{2}{d+1} \right)}=1-\frac{2}{(m+1)(m+2)}$$
we see that for any $m$ fixed and large $n$, from \eqref{1} 
$$\sum^n_{d=m+1}\frac{n_d}{n}=\frac{2}{(m+1)(m+2)}+o(1)$$
or 
$$\sum^n_{d=m+1}n_d=n\frac{2}{(m+1)(m+2)}+o(n),$$
from where, together with \eqref{2}, by Cauchy-Schwarz we see that
$$\sum^{n}_{d=m+1}{n_d\sqrt{d}} \leq  \sqrt{(2n-2)(n)\frac{2}{(m+1)(m+2)}} + o(n)\leq n\frac{2}{m} + o(n).$$
	
By using that $\frac{n_d}{n}=\alpha_{d}+o(1)$ for $1\leq d\leq m$, we get the relations: 
	\begin{align*}
	\frac{S}{n}&=2\sum^n_{d=1}{\frac{n_d}{n} \sqrt{d-1}}\\
	&=2 \sum^{m}_{d=1}{\frac{4\sqrt{d-1}}{(d)(d+1)(d+2)}}+o(1)+ \sum^{n}_{d=m+1}{\frac{n_d}{n} \sqrt{d-1}} \\		
	&\leq 2\sum^{m}_{d=1}{\frac{4\sqrt{d-1}}{(d)(d+1)(d+2)}}+o(1)+\frac{1}{n}\sum^{n}_{d=m+1}{n_d\sqrt{d} }\\
	&\leq 2\sum^{m}_{d=1}{\frac{4\sqrt{d-1}}{(d)(d+1)(d+2)}}+\frac{2}{m} + o(1)\\
	&\leq 2\sum^{\infty}_{d=1}{\frac{4\sqrt{d-1}}{(d)(d+1)(d+2)}}+\frac{2}{m} + o(1).
	\end{align*}
	which holds for any fixed $m$ and large $n$.
	
Since $m$  is fixed but arbitrary, we obtain the asymptotic bound
$$\frac{S}{n}\leq2\sum^{\infty}_{d=1}{\frac{4\sqrt{d-1}}{(d)(d+1)(d+2)}}\approx 1.00576755.$$

\subsection{Proof of Theorem \ref{T1}}	
To achieve our goal we need to look a little bit further on the degrees of the tree. So let us denote by $n_{kl}$ the proportion of edges that connect a pair of nodes of degrees $k$ and $l$ with $k\leq l$ in our graph $G$. As showed in \cite{BRST}, with probability tending to $1$, as $n\to\infty$ we have
	\begin{align*}
		n_{kl}=&\frac{4(l-1)}{k(k+1)(k+l)(k+l+1)(k+l+2)}\\
		+&\frac{12(l-1)}{k(k+l-1)(k+l)(k+l+1)(k+l+2)}+o(1).
	\end{align*}
In particular, $n_{2,2}=\frac{1}{45}+o(1)$.
	
	Label the vertices of $G$ as $v_1,\cdots,v_n$ so that $v_1$ is a vertex of highest degree $\Delta$ in $G$ and partition the graph into a star $S_{\Delta+1}$ centered at a $v_1$ and $n-1$ stars $S_{d_i}$ centered in each of the other vertices as in Theorem \ref{T2}. For each vertex $v$, denote the corresponding star with center in $v$ by $H(v)$. 
	
	Let $u_1,u_2$ be adjacent vertices of degree $2$ such that $H(u_1)$ is the graph induced by this pair of vertices. Let $u_0,u_3$ be the only other neighbors of $u_1$ and $u_2$, respectively. We change the partition of the graph by taking out $H(u_1),H(u_2)$ from it and replacing them with their union. As both graphs are stars with one edge and they share a vertex, its union is a star $S_3$. The energy of $S_3$ is $2\sqrt{2}$, opposed to the sum of energies of $H(u_1),H(u_2)$ which is $4$, so the sum of energies of all parts after this change in the partition is reduced by $(4-2\sqrt{2})$. We can repeat this process for each other pair of neighbors of degree $2$ as long as they are different from any previously chosen pairs $\lbrace u_1,u_2\rbrace$ and from previous pairs of the form $\lbrace u_0,u_1\rbrace$ or $\lbrace u_2,u_3\rbrace$.
	
	Each time we iterate this process, at most $3$ pairs of neighbors of degree $2$ become unusable. Therefore, we are able to do this replacement at least $\frac{1}{3}n\cdot n_{2,2}$ times. After iterating as many times as possible, we get a partition of $G$ into graphs $K_1,\cdots,K_m$ such that:
	\begin{align*}
	\frac{\mathcal{E}(G)}{n} \leq&\frac{1}{n}\sum_{j=1}^{m}{\mathcal{E}(K_j)}\\
	\leq&\frac{1}{n}\sum_{i=1}^{n}{\left(\mathcal{E}(H(v_i))\right)}-\frac{n_{2,2}}{3}(4-2\sqrt{2})\\
	=&\frac{1}{n}\left(\sum^n_{i=2} 2\sqrt{d_i-1}+2\sqrt{\Delta}\right)-\frac{4-2\sqrt{2}}{3\cdot45}+o(1)\\
	\leq&\frac{S}{n}+\frac{1}{n\sqrt{\Delta-1}}-\frac{4-2\sqrt{2}}{3\cdot45}+o(1)\\
	\leq& 2\sum^{\infty}_{d=1}{\frac{4\sqrt{d-1}}{(d)(d+1)(d+2)}}-\frac{4-2\sqrt{2}}{135}+o(1)\\
	\approx& 0.997089+o(1)
	\end{align*}

This proves that with probability tending to $1$ as $n\to\infty$, a graph $G$ of $n$ vertices that follows the model of Barabasi-Albert is hypoenergetic.

Finally, let us notice that the  above considerations actually work for any tree,  resulting in the following
\begin{thm}\label{T4} Let $T$ be a tree with degrees $\Delta=d_1\geq....\geq d_n$, $n\geq3$. Then 
		\begin{equation} \label{bound3} \mathcal{E}(T)\leq  \sum^n_{i=2} 2\sqrt{d_i-1}+2\sqrt{\Delta}-\frac{e_{2,2}}{3}(4-2\sqrt{2})
\end{equation}
where $e_{2,2}$ is the number of pairs of vertices of degree $2$ which are joined by an edge.
\end{thm}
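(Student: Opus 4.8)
The plan is to run the deterministic skeleton of the argument used for Theorem~\ref{T1}, discarding the probabilistic input about the degree statistics of Barabasi--Albert trees and keeping only the combinatorial mechanism. First I would reproduce the starting partition of Theorem~\ref{T2}: root $T$ at a vertex $v_0$ of maximal degree $\Delta$, and to each vertex $v$ associate the star $H(v)$ on the closed neighbourhood of $v$ with the edge towards its parent deleted. The family $\{H(v)\}_{v\in V}$ is a graph partition, so by Ky-Fan's inequality \eqref{kyfanine} one has $\mathcal{E}(T)\leq\sum_{i=2}^{n}2\sqrt{d_i-1}+2\sqrt{\Delta}$, which is exactly the claimed bound before the correction term. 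The task is then to extract the saving $\frac{e_{2,2}}{3}(4-2\sqrt{2})$ by a careful modification of this partition.

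The key local move is the following. For a non-root vertex $v$ of degree $2$, the star $H(v)$ is a single edge (its energy is $2\sqrt{2-1}=2$), namely the edge joining $v$ to its unique child. Hence, given two adjacent degree-$2$ vertices $u_1,u_2$ with $u_2$ the child of $u_1$, the parts $H(u_1)$ and $H(u_2)$ are single edges sharing the vertex $u_2$, and their union is a path on three vertices, i.e. a copy of $S_3$ with energy $2\sqrt{2}$. Replacing the two parts by their union leaves a valid graph partition (the edge set is still covered exactly once), so Ky-Fan's inequality still applies, and the total sum of energies drops by $2+2-2\sqrt{2}=4-2\sqrt{2}$. I would record this as the elementary gain per executed move.

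The heart of the proof is then a counting argument bounding from below the number of moves that can be carried out. Call a pair of adjacent degree-$2$ vertices a \emph{candidate}; there are $e_{2,2}$ of them. Each executed move on $\{u_1,u_2\}$ absorbs the edges $u_1u_2$ and $u_2u_3$ into a single $S_3$ part, and this renders unusable the processed candidate $\{u_1,u_2\}$ itself together with the at most two neighbouring candidates $\{u_0,u_1\}$ and $\{u_2,u_3\}$, each of whose own move would require one of the now-merged stars $H(u_1)$ or $H(u_2)$. Thus each move destroys at most three candidates. Running the moves greedily until no candidate survives, after $k$ moves at most $3k$ candidates have been destroyed, so $3k\geq e_{2,2}$ and hence $k\geq e_{2,2}/3$. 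Each of these $k$ moves contributes a positive saving of $4-2\sqrt{2}$, and combining with the base estimate from Theorem~\ref{T2} yields \eqref{bound3}.

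The step I expect to demand the most care is the bookkeeping in this last paragraph: one must verify that every single move preserves the graph-partition property throughout the iteration, identify precisely which candidates become unusable so that ``at most three'' is an honest upper bound and not an undercount, and confirm that the greedy process halts only when no candidate remains (which is what legitimates $3k\geq e_{2,2}$). The energy computations for $K_2$ and $S_3$, and the reduction to Theorem~\ref{T2}, are routine by comparison.
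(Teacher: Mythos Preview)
Your plan is exactly the paper's argument: the same rooted star partition from Theorem~\ref{T2}, the same local merge $H(u_1)\cup H(u_2)\to S_3$, and the same greedy ``at most three candidates destroyed per move'' count to conclude $k\ge e_{2,2}/3$.

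The step you flag in your last paragraph is, however, a genuine gap rather than routine bookkeeping (and it is present in the paper too). A candidate $\{u_1,u_2\}$ is processable only when both $H(u_1)$ and $H(u_2)$ are single edges, which fails if one of them is the root. When $\Delta\ge 3$ the root has degree at least $3$ and is never part of a candidate, so every surviving candidate remains processable and the greedy process really does halt only when all candidates are destroyed; in that regime your proof is complete. But when $\Delta=2$ the tree is a path, the root has degree $2$, and the candidates incident to it are neither processable nor destroyed by any move. For $T=P_4$ this gives $e_{2,2}=1$ while $k=0$, and in fact the asserted inequality becomes
\[
\mathcal{E}(P_4)\le 2+2\sqrt{2}-\tfrac{1}{3}(4-2\sqrt{2})=\tfrac{2+8\sqrt{2}}{3}\approx 4.438,
\]
whereas $\mathcal{E}(P_4)=2\sqrt{5}\approx 4.472$. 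So \eqref{bound3} is actually false as stated; your outline (and the paper's) establishes it under the additional hypothesis $\Delta\ge 3$.
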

	
\section{Conclusion, generalizations and open problems}

We end with some consideration and simulations leading to new open problems. All the simulations where done in Python 3.8.3. and plots in tikzplotlib v0.9.4.

\subsection{Comparing the bound from Theorem 3.1}

We calculated the values of both the quotient energy/size and the bound of Theorem 3.1 for $200$ random trees of size $n=2000$ following the Barabasi-Albert model with parameter $\alpha=1$. Results suggest that an increase or decrease in the energy results in a similar change in the bound from Theorem 3.1.  We believe that this means that for generic trees $\sum^n_{i=2} 2\sqrt{d_i-1}$ is proportional to the energy. 
Also, from the tables above we conjecture that there is a constant $c<1.5$ such that for any tree $c \mathcal{E}(T)\geq 2 \sum_i(d_i-1)$. 

\newpage
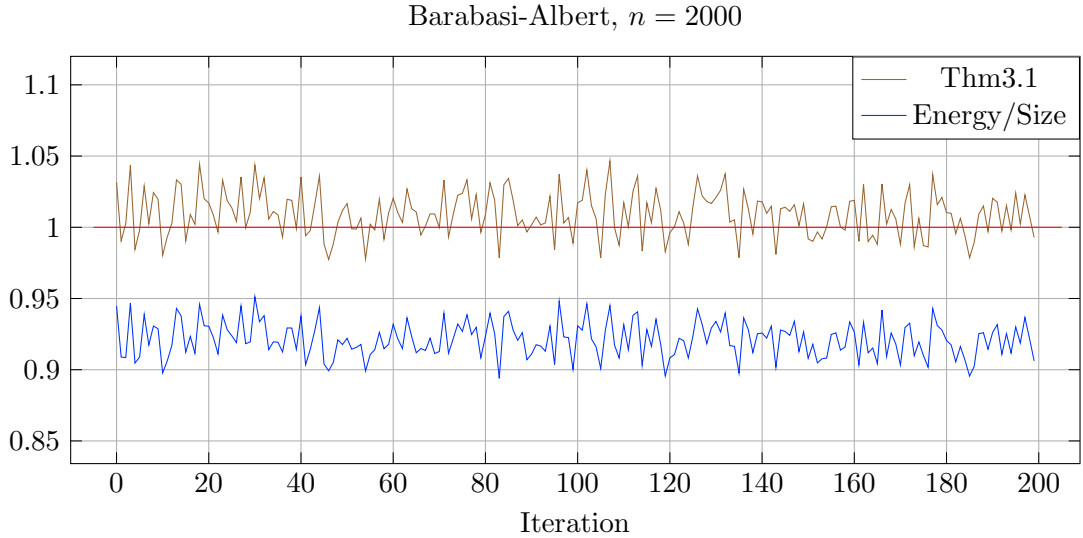
\begin{figure}[h]\label{Fig1}
\begin{tikzpicture}

\definecolor{color0}{rgb}{0,.2,1}
\definecolor{color1}{rgb}{.6,.4,.2}
\definecolor{color2}{rgb}{0.172549019607843,0.627450980392157,0.172549019607843}

\begin{axis}[
width=15cm,
height=7cm,
tick align=inside,
tick pos=both,
title={Barabasi-Albert, $n=2000$},
x grid style={white!69.0196078431373!black},
xlabel={Iteration},
xmajorgrids,
xtick style={color=black},
y grid style={white!69.0196078431373!black},
ymajorgrids,
ytick style={color=black},
legend style={at={(1,1)}},
xmin=-9.95, xmax=208.95,
ymin=0.834071832626361,ymax=1.12, 
]

\addplot [color1]
table {%
0 1.03147653780263
1 0.990547282415062
2 1.00261252002671
3 1.04340726495837
4 0.984537091002853
5 0.998053755516907
6 1.02889830401742
7 1.0025730371267
8 1.02435176659555
9 1.01968508959446
10 0.980379395310951
11 0.993311863076522
12 1.0027216993001
13 1.03330252581924
14 1.03022043513046
15 0.990481778149708
16 1.00889288055191
17 1.00214962549481
18 1.04426699692853
19 1.01996718767787
20 1.0170946617702
21 1.00850191291331
22 0.99662037291773
23 1.0330722447785
24 1.01869122640896
25 1.0136451763066
26 1.0041918007648
27 1.03526878817636
28 0.999701258302199
29 1.01064291634528
30 1.04366381429311
31 1.02009187963198
32 1.03468304265593
33 1.00587098569789
34 1.01103183469222
35 1.00873984960452
36 0.993137717156413
37 1.01964782036668
38 1.01892560314581
39 0.999057342048058
40 1.03525086055587
41 0.994042258855695
42 0.997636628300859
43 1.01629104673188
44 1.03595151969073
45 0.98813703028539
46 0.977419450988331
47 0.988267399308608
48 1.00341721074295
49 1.01186833926127
50 1.01661036374863
51 0.998959354237023
52 0.99875931044789
53 1.0063900324363
54 0.977681940768032
55 1.00211433970102
56 0.998149004852268
57 1.0197228777047
58 0.99164594851916
59 1.01003139719534
60 1.02037686494709
61 1.01020894233774
62 1.0034035837298
63 1.02683528398165
64 1.01304138739037
65 1.0106994743787
66 0.99465060239005
67 1.00088505776501
68 1.00942695485725
69 1.00931282660297
70 0.999831325082097
71 1.03311560294832
72 0.993110246450344
73 1.00885882352371
74 1.0225504314789
75 1.02381239901846
76 1.0332300616353
77 1.00580142077036
78 1.02286725961282
79 0.996483163055406
80 1.0087901424851
81 1.03117417578125
82 1.01910503174866
83 0.97860279052157
84 1.02977456555596
85 1.03432087525253
86 1.01906381545874
87 1.00114011327788
88 1.00511928236208
89 0.996750140562803
90 1.00162759471854
91 1.00711699533818
92 1.00168146243768
93 1.00309014365922
94 1.02169611573473
95 0.984106863779933
96 1.0372186598303
97 1.00303516991523
98 1.00682049210787
99 0.988972922096653
100 1.0171880645011
101 1.01895657079538
102 1.04010961309606
103 1.01509488048162
104 1.00609296669139
105 0.978400825621367
106 1.02295528245354
107 1.04648001251454
108 0.99962036459562
109 0.987442207434852
110 1.01721475784174
111 1.00032407424423
112 1.02526367186299
113 1.03603670946693
114 0.983285768338302
115 1.01716417439493
116 1.00326434900389
117 1.02720149491209
118 1.01210861822459
119 0.982812740738164
120 0.996249639671017
121 1.0004049774342
122 1.01089758778765
123 1.00289971654849
124 0.988024660680728
125 1.01457209993152
126 1.03604791950878
127 1.02199524076438
128 1.01817744686901
129 1.01670279723642
130 1.02094419018261
131 1.026014865999
132 1.03744914376666
133 1.00371075235416
134 1.00519936052039
135 0.978661139697279
136 1.02669099837926
137 1.01326500657219
138 0.995244183735137
139 1.01836098950522
140 1.01791325240224
141 1.00982779823281
142 1.01472408157708
143 0.980973154533577
144 1.01301317941078
145 1.01404615777793
146 1.01122629708875
147 1.01584541627922
148 1.00117988009994
149 1.01650761631004
150 0.991799778450635
151 0.990146670459595
152 0.996681602032744
153 0.991834041046493
154 1.00028321928053
155 1.01440286767547
156 1.01485946897476
157 1.00017093781889
158 0.998036361524703
159 1.01822670262006
160 1.01894551475204
161 0.990132131815675
162 1.03007496959
163 0.990021544416759
164 0.994391957238471
165 0.988051267816316
166 1.03038365791209
167 1.002216243355
168 1.01242917382634
169 1.00583893833306
170 0.987980230283248
171 1.01692063970931
172 1.02992595379423
173 0.986259025914432
174 1.00615999937486
175 0.987118529053392
176 0.986229744718951
177 1.0369717400159
178 1.01589404315551
179 1.02097194117825
180 1.01031786241946
181 1.00976464844579
182 0.995554641607478
183 1.00625430642979
184 0.993995223569609
185 0.978559352556126
186 0.989405216547566
187 1.00916531195219
188 1.01494175628878
189 0.99711318511211
190 1.02035709679069
191 1.01757769858646
192 0.99740182951294
193 1.01525856577941
194 0.998211519685206
195 1.02400267867027
196 1.00230759078848
197 1.02265209278062
198 1.00788962052478
199 0.992856106646352
};
\addlegendentry{Thm3.1}
\addplot [color0]
table {%
	0 0.944736052127596
	1 0.908872058061806
	2 0.908503753268027
	3 0.946745548671455
	4 0.904636091003337
	5 0.90888106884928
	6 0.938663125994264
	7 0.917423650508971
	8 0.930666913792919
	9 0.928764153118309
	10 0.897728524992735
	11 0.906250322893796
	12 0.917108216529508
	13 0.942989697177773
	14 0.937857449205368
	15 0.91228330033165
	16 0.923344680448353
	17 0.911564642355359
	18 0.945896305196653
	19 0.930888780963125
	20 0.930487561834502
	21 0.922550483723611
	22 0.911199538722856
	23 0.938373368107531
	24 0.928053746613269
	25 0.923804419869055
	26 0.918985933733919
	27 0.944688450999485
	28 0.918286880145527
	29 0.919538706449391
	30 0.951208412148625
	31 0.933665808555517
	32 0.937960890196981
	33 0.914123182406193
	34 0.919524459326946
	35 0.91933911221973
	36 0.912471794446802
	37 0.929348241581026
	38 0.929235969358352
	39 0.914350848675966
	40 0.937478313343917
	41 0.903504372432035
	42 0.914164428143177
	43 0.927674933805673
	44 0.943585621752385
	45 0.903946758369628
	46 0.899247641413809
	47 0.905107324134749
	48 0.92094369000863
	49 0.917684984572741
	50 0.922012433504727
	51 0.914393814464649
	52 0.915680335901973
	53 0.917732695410538
	54 0.899063396951165
	55 0.91058217074034
	56 0.914040996560829
	57 0.926268737099647
	58 0.914768480310512
	59 0.91779987792251
	60 0.931820194772348
	61 0.921649005398515
	62 0.914851256122977
	63 0.936344762632373
	64 0.923359208350725
	65 0.911906567904756
	66 0.914863102444869
	67 0.913533954370129
	68 0.922069158354546
	69 0.911390800034848
	70 0.912760212721818
	71 0.939542839154537
	72 0.911522428513004
	73 0.921591777662894
	74 0.932057297968888
	75 0.926923971765858
	76 0.938502982165274
	77 0.924688904451622
	78 0.929721289582172
	79 0.908486327307437
	80 0.923431090852545
	81 0.939602119736773
	82 0.925355796035348
	83 0.893954290825949
	84 0.937321515175208
	85 0.941021510555538
	86 0.927861896739494
	87 0.920678734153591
	88 0.92597869356175
	89 0.907040569669495
	90 0.910908923534854
	91 0.917543657859646
	92 0.916598774584579
	93 0.912986272016822
	94 0.930477897763589
	95 0.90368570673771
	96 0.94798468667587
	97 0.923018439947101
	98 0.922556129128524
	99 0.900343262436047
	100 0.930786634158167
	101 0.927799697063073
	102 0.946263687398413
	103 0.921626878620612
	104 0.916006409462132
	105 0.900855671072754
	106 0.927515222158863
	107 0.944929870442286
	108 0.917203369528416
	109 0.908285575640153
	110 0.931547211393036
	111 0.914013956328321
	112 0.938165560074833
	113 0.940642706886157
	114 0.90314369866535
	115 0.92816489979015
	116 0.916463909408247
	117 0.935314025361582
	118 0.917950120799184
	119 0.895575800103755
	120 0.908337988116502
	121 0.910634776612711
	122 0.922056544687248
	123 0.920358772982061
	124 0.908288287384544
	125 0.923610812617931
	126 0.942722124369889
	127 0.931880385542039
	128 0.918319129905876
	129 0.929299585764899
	130 0.933873397445966
	131 0.926528323817735
	132 0.939691312743216
	133 0.916870504962696
	134 0.916439291260051
	135 0.89769375220448
	136 0.936404049426637
	137 0.92822939265621
	138 0.912086600335993
	139 0.925288249906699
	140 0.925718067009992
	141 0.92219460910618
	142 0.93060654855905
	143 0.901706987202141
	144 0.927890521756088
	145 0.926807275049618
	146 0.924150594823705
	147 0.933912868092413
	148 0.912812807726606
	149 0.926551804859275
	150 0.907997806598916
	151 0.917799308956698
	152 0.904807194390539
	153 0.907693118757727
	154 0.908155497310839
	155 0.924773183902571
	156 0.925869028199144
	157 0.913742317439997
	158 0.915921913022492
	159 0.933522032034232
	160 0.926474840450034
	161 0.903408054047762
	162 0.933235623692794
	163 0.91191793756844
	164 0.915236798679612
	165 0.904338389239458
	166 0.941837281476997
	167 0.909206113274577
	168 0.925778996874116
	169 0.917980023461728
	170 0.903303873697385
	171 0.929657420935462
	172 0.932645238659266
	173 0.909803718795936
	174 0.919438367017815
	175 0.909915010296967
	176 0.901778398466262
	177 0.94288133688678
	178 0.930971986987404
	179 0.927870996179411
	180 0.920528227579453
	181 0.917459274403038
	182 0.905588628696257
	183 0.91621377083353
	184 0.906805979680404
	185 0.895500090301006
	186 0.902336135350172
	187 0.925262449247259
	188 0.926026044515238
	189 0.914415026615794
	190 0.926173146449536
	191 0.931659899714391
	192 0.910852517004978
	193 0.925304506602809
	194 0.911761613939036
	195 0.930034907944593
	196 0.918757319147974
	197 0.93661584716725
	198 0.92186558340518
	199 0.906209378499447
};
\addlegendentry{Energy/Size}
\addplot[color=red] 
coordinates {(-5,1) (205,1)};
\end{axis}

\end{tikzpicture}\vskip.3cm

\caption{ Energy/size for $200$ random trees of size $n=2000$ following the Barabasi-Albert compared with the bound from Theorem 3.1}
\end{figure}

\subsection{Varying $\alpha$}

When considering the general framework of preferential models, naturally, one is lead to ask how the energy changes when the paramater $\alpha$ varies.

Now, as the parameter $\alpha$ changes, the influence that the degree of a vertex has in the probability of its current degree to increase also changes. As $\alpha$ increases, so does this influence. Likewise, greater values of $\alpha$ intensify the distinction of the vertices of higher degrees when compared with the rest.  To be expilicit, as pointed out in \cite{KRL}, for $\alpha>1$ there are a few vertices which are linked to almost every other vertex and for $\alpha<1$ hubs are much smaller. In other words, when $\alpha$ is large, the preferential tree is similar to a star, while when $\alpha$ is small it is similar to a path.

Thus, recalling that among trees of constant size, the star is the one that has the lowest energy, and the path is the one with highest energy, the intuition leads to think that, as the value of $\alpha$ increases, the energy  decreases towards $\mathcal{E}(S_n)=2\sqrt{n-1}$. On the contrary, when $\alpha$ is small we expect the energy to increase towards $\mathcal{E}(P_n)\sim1.273n$. In Figure 4 we show a simulation where we calculate the acumulated avarege energy of $100$ realizations, of a BA random tree with $\alpha$ in $\{-5,-2,0,0.5,0.7,1,1.2,1.5,1.7,2\}$. This simulation strengthens this intuition. 

It may be possible to show by our same methods that a preferential model with parameter $\alpha>1$ is hypoenergetic. For $\alpha< 1$ the degree distribution degree distribution does not follow a power laws,  but a distribution of the form
$$q(d)=\frac{s}{d^\alpha}\prod^d_{i=1}\frac{i^\alpha}{s+i^\alpha}$$
where $s$ is such that $\sum^\infty_{d=1}q(d)=1$.

It would be interesting if there is a bound from below as a function of the degrees to show that for certain $\alpha$ the BA tree is not hypoenergetic.
\newpage

\begin{figure}[h]\label{Fig1}
\begin{tikzpicture}

\definecolor{color0}{rgb}{0.12156862745098,0.466666666666667,0.705882352941177}
\definecolor{color1}{rgb}{1,0.498039215686275,0.0549019607843137}
\definecolor{color2}{rgb}{0.172549019607843,0.627450980392157,0.172549019607843}
\definecolor{color3}{rgb}{0.83921568627451,0.152941176470588,0.156862745098039}
\definecolor{color4}{rgb}{0.580392156862745,0.403921568627451,0.741176470588235}
\definecolor{color5}{rgb}{0.549019607843137,0.337254901960784,0.294117647058824}
\definecolor{color6}{rgb}{0.890196078431372,0.466666666666667,0.76078431372549}
\definecolor{color7}{rgb}{0.737254901960784,0.741176470588235,0.133333333333333}
\definecolor{color8}{rgb}{0.0901960784313725,0.745098039215686,0.811764705882353}

\begin{axis}[
width=14cm,
height=9cm,
legend cell align={left},
legend style={fill opacity=0.8, draw opacity=1, text opacity=1, at={(0.09,0.5)}, anchor=west, draw=white!80!black},
tick align=inside,
tick pos=left,
title={Albert Barabasi, n=1000},
x grid style={white!69.0196078431373!black},
xlabel={Iteration},
xmajorgrids,
xmin=-4.95, xmax=103.95,
xtick style={color=black},
y grid style={white!69.0196078431373!black},
ylabel={Energy/size},
ymajorgrids,
ymin=0.0141298025129824, ymax=1.31038842992411,
ytick style={color=black},
legend style={at={(1,.689)}},
]
\addplot [color0]
table {%
0 1.25106538063863
1 1.2514675832236
2 1.25060444668747
3 1.25030176083044
4 1.25053430422407
5 1.2503644016052
6 1.25008473685082
7 1.24996881010317
8 1.25016981987839
9 1.25034256592929
10 1.25033379890363
11 1.25037514421234
12 1.2503123475325
13 1.25034025035676
14 1.25042066713076
15 1.25046003640694
16 1.25037617215585
17 1.25027740190653
18 1.25031955633615
19 1.25019919919678
20 1.25018027704643
21 1.25017236712285
22 1.25018752563013
23 1.2501869692505
24 1.25016872098736
25 1.25020776029734
26 1.25024626365414
27 1.2501599146221
28 1.24998314818952
29 1.25006482259629
30 1.24999955308545
31 1.25000112088292
32 1.250044338182
33 1.24999863271639
34 1.24986265513933
35 1.24985970350318
36 1.24983051711457
37 1.24987673540249
38 1.24987133843926
39 1.24992415838568
40 1.24985188701246
41 1.24987549817841
42 1.24987621968574
43 1.24984544402119
44 1.2498605741778
45 1.2498136413185
46 1.24975060959533
47 1.24976835638474
48 1.24975994505843
49 1.24983566114363
50 1.24985711097616
51 1.24984515104125
52 1.24985833057797
53 1.24989047489262
54 1.24991815275634
55 1.24988005902659
56 1.2498843149264
57 1.24991704694711
58 1.24988787450341
59 1.24992249994664
60 1.24991958722599
61 1.24991346006785
62 1.2498895673842
63 1.24991383695347
64 1.24991053205128
65 1.2498995157812
66 1.24995366197275
67 1.24996027614434
68 1.24995870550653
69 1.24989389194311
70 1.24986604257317
71 1.24986266185641
72 1.24987244949443
73 1.2498783636104
74 1.24988201875939
75 1.24988378552148
76 1.24984481024753
77 1.24985045667825
78 1.24986309665252
79 1.24985545240865
80 1.24985888538125
81 1.24986981445017
82 1.24986792702321
83 1.24986376784098
84 1.24983101062935
85 1.24983295867255
86 1.24983654296834
87 1.24985196321214
88 1.24986957735674
89 1.24985598809202
90 1.24984012720291
91 1.24986274751504
92 1.24985763532332
93 1.24985918873642
94 1.24981411342707
95 1.24982264629862
96 1.24981978414848
97 1.24981286804514
98 1.24985904946604
99 1.24984790519253
};
\addlegendentry{-5}
\addplot [color1]
table {%
0 1.21247014743684
1 1.21301541102344
2 1.21361942078301
3 1.21418535797234
4 1.21171035855936
5 1.21211960879269
6 1.21244497799023
7 1.21307129913116
8 1.21308450451762
9 1.21340822797476
10 1.21342936438853
11 1.21390551838772
12 1.21392772600266
13 1.21370470538126
14 1.21379439030726
15 1.21365899040438
16 1.21356346555341
17 1.21356531445938
18 1.21370607988431
19 1.21394470256327
20 1.21392852225539
21 1.21397429551209
22 1.21398192592727
23 1.21370853366327
24 1.2136657901899
25 1.2135574203224
26 1.21370049123458
27 1.21388325171282
28 1.21393529578385
29 1.21402249214542
30 1.21388491426716
31 1.21392694639656
32 1.21396761974556
33 1.21396939992611
34 1.2138265323023
35 1.21384469014635
36 1.21376694392843
37 1.21393795931514
38 1.21400029473389
39 1.2139438150128
40 1.21395344666956
41 1.21400320561392
42 1.21405682350734
43 1.21399149953898
44 1.21389793361066
45 1.213879330207
46 1.21390384212012
47 1.21382433933917
48 1.21391090743639
49 1.21403189244726
50 1.21395541764416
51 1.21393194000948
52 1.21403592440038
53 1.214071219594
54 1.21404687398082
55 1.21418216138287
56 1.214180525386
57 1.2141379784825
58 1.21407128270365
59 1.21404737208946
60 1.21406914430805
61 1.21411129780592
62 1.21414393360991
63 1.21410053005639
64 1.21396802601243
65 1.2139209914373
66 1.21398513287904
67 1.21394215301854
68 1.21384828204345
69 1.21382807357238
70 1.21384409796906
71 1.21386322906005
72 1.21383033274567
73 1.21378493957721
74 1.21384921064551
75 1.21387608763358
76 1.21382190407491
77 1.21379983547329
78 1.21378140249676
79 1.21375344360487
80 1.21373677645847
81 1.21378521621871
82 1.21375609659114
83 1.21377152401749
84 1.21385082636916
85 1.21387280168404
86 1.21394583385802
87 1.2139629631124
88 1.21394252447909
89 1.21397558476623
90 1.21395903279611
91 1.21395450816313
92 1.21394785817936
93 1.21394558838086
94 1.21388569409288
95 1.21393104929641
96 1.21392303254487
97 1.21392556170435
98 1.21393384914531
99 1.21391516314891
};
\addlegendentry{-2}
\addplot [color2]
table {%
0 1.12688340695944
1 1.13115370230364
2 1.13411443934254
3 1.13404202893921
4 1.13190348598687
5 1.13052379662014
6 1.13046100335723
7 1.13141272292958
8 1.13186730035392
9 1.13231161373986
10 1.13198422581032
11 1.1312201238252
12 1.1307267471658
13 1.13096758647548
14 1.13116503848986
15 1.13128907287804
16 1.13144369367809
17 1.13133242967758
18 1.13195932841521
19 1.13221417948847
20 1.13184727300408
21 1.13102452709641
22 1.13080849834362
23 1.13058009354331
24 1.13061445762141
25 1.13031782712676
26 1.1306598123692
27 1.13087602695687
28 1.13107261430381
29 1.13072415318057
30 1.13079713556781
31 1.13081563745205
32 1.13056601280967
33 1.13055909583133
34 1.13006448620004
35 1.12998578319762
36 1.13044201912189
37 1.13028907900779
38 1.12990214824194
39 1.12998904331356
40 1.12955945974326
41 1.12962894587904
42 1.12940915073327
43 1.12940537674513
44 1.1294069407782
45 1.12946979172728
46 1.12918330603447
47 1.12910001850958
48 1.12886624755516
49 1.12870086692263
50 1.12863238729812
51 1.12841611409183
52 1.12814346559951
53 1.12829914356166
54 1.12826438269065
55 1.12835165474147
56 1.12828879909109
57 1.12826058248672
58 1.12817378085581
59 1.12815609469706
60 1.12806449251065
61 1.1279621726803
62 1.12780158067644
63 1.12782335997336
64 1.12787265914776
65 1.12797312440984
66 1.12807821439803
67 1.12810460760129
68 1.12799662863632
69 1.12792600229894
70 1.12780756324741
71 1.12770139143146
72 1.12763869703861
73 1.1277118724678
74 1.12775227088632
75 1.12769400265948
76 1.12781227803853
77 1.12789539429768
78 1.12785926375994
79 1.12778592788627
80 1.12778343827194
81 1.12779973049147
82 1.12762772668993
83 1.12782330722813
84 1.12794571682297
85 1.12802461928842
86 1.1279496553995
87 1.1280675812562
88 1.12803603420904
89 1.12812242046784
90 1.12819916143195
91 1.12826477469373
92 1.12835081012877
93 1.12844171818568
94 1.12831023976818
95 1.12834816863889
96 1.12835162398712
97 1.12833476103095
98 1.12825874170298
99 1.12823989961324
};
\addlegendentry{0}
\addplot [color3]
table {%
0 1.07094303219878
1 1.06523649737307
2 1.06639486385219
3 1.06747520255996
4 1.06975067665877
5 1.07050357363029
6 1.07009938401155
7 1.06901757609494
8 1.06752799850984
9 1.06812651182656
10 1.06743035173731
11 1.06747063712028
12 1.06880146009242
13 1.06801280359506
14 1.06795192557159
15 1.06727800611644
16 1.06723145189937
17 1.06761695977506
18 1.0671884786939
19 1.06688038166982
20 1.06721451903156
21 1.06660155223981
22 1.06642537608871
23 1.06640820561433
24 1.06604186136108
25 1.06687327766016
26 1.0668536861128
27 1.06700224417835
28 1.06704542242344
29 1.06716410760853
30 1.06725585668997
31 1.06669118229855
32 1.06690427825368
33 1.06664287013489
34 1.0668640657514
35 1.06673663410873
36 1.06663559693221
37 1.06668440018188
38 1.0668148676378
39 1.06681001904173
40 1.06735622007974
41 1.06728291605757
42 1.06725581458344
43 1.06706394618649
44 1.06698557466639
45 1.06682302347417
46 1.06675089302142
47 1.06655230697298
48 1.06658723306367
49 1.06649251697437
50 1.06682413468904
51 1.06721385088454
52 1.06745767748643
53 1.06737845031017
54 1.06714338756942
55 1.06713146873491
56 1.06696769597653
57 1.06720808685885
58 1.06719961974682
59 1.06707092851811
60 1.06716980727837
61 1.06717472401244
62 1.06721578165141
63 1.06705960311786
64 1.06729438157018
65 1.06713878928616
66 1.06706537587919
67 1.0670798136626
68 1.06693766173816
69 1.06710423980104
70 1.06683212960488
71 1.0667883889688
72 1.06651680621632
73 1.06653086320791
74 1.06651841369259
75 1.06637403957893
76 1.06642043929068
77 1.06626625986413
78 1.06619199931714
79 1.06603560552819
80 1.06612017880229
81 1.06597806797033
82 1.06608054767473
83 1.06606858391001
84 1.06620159969483
85 1.06624591786998
86 1.06623390405362
87 1.06608039933774
88 1.06614042022586
89 1.06621103641756
90 1.06643367238715
91 1.06624126266151
92 1.06621560180739
93 1.06614304348459
94 1.06624151517416
95 1.06643265194034
96 1.06631315907236
97 1.06643197305307
98 1.06642395900225
99 1.06651944111657
};
\addlegendentry{0.5}
\addplot [color4]
table {%
0 1.03741819312817
1 1.03673013416122
2 1.03135578927556
3 1.02824697554282
4 1.02902133033419
5 1.03189361266628
6 1.02929567098244
7 1.02784820567772
8 1.02947492373392
9 1.02782893651115
10 1.02910519987223
11 1.02920291530872
12 1.02949102023271
13 1.02887357954878
14 1.02764160064057
15 1.02652540003141
16 1.02634723427735
17 1.02678848131318
18 1.02665796904182
19 1.02719855114011
20 1.02633244291813
21 1.02615072498165
22 1.02598284306332
23 1.02564948843316
24 1.02557881904412
25 1.02626705648237
26 1.02719731567357
27 1.02665039341908
28 1.02681667237769
29 1.02720671391219
30 1.02682433311757
31 1.0269688982965
32 1.02682330210107
33 1.02691765300421
34 1.02653335326181
35 1.02653566527745
36 1.02615522806098
37 1.02583918190007
38 1.02542828188539
39 1.02535204349666
40 1.02534309953373
41 1.02477724312246
42 1.02496129279448
43 1.02517006612216
44 1.02493579477328
45 1.02531313919095
46 1.02528505929667
47 1.02516571295018
48 1.02491815737386
49 1.02504085030447
50 1.02505422469632
51 1.02553344898223
52 1.02581033850726
53 1.02571598262307
54 1.02563680190161
55 1.02541070364681
56 1.02519394876244
57 1.02490083216194
58 1.02499326197022
59 1.02505075013601
60 1.02530418818977
61 1.0250574846061
62 1.02490441585838
63 1.02512437214142
64 1.0252858593999
65 1.02554175444515
66 1.02545471602537
67 1.02540928382055
68 1.02547200467278
69 1.02545301723294
70 1.0254184644926
71 1.02542135058145
72 1.0256460181536
73 1.0256843732341
74 1.0255789694395
75 1.02565267548571
76 1.02587083568676
77 1.02562758953985
78 1.02543760863044
79 1.02555344379258
80 1.02582396427941
81 1.0258955624469
82 1.02596348728573
83 1.02583627976469
84 1.02566174061724
85 1.02570359922614
86 1.02577335528102
87 1.02589591997212
88 1.02593046672932
89 1.02599471256875
90 1.0259667576601
91 1.0259373160511
92 1.02593762109058
93 1.02614682046879
94 1.02604547920939
95 1.02604813944312
96 1.02603021531553
97 1.02610328787316
98 1.02606563189676
99 1.02614034272633
};
\addlegendentry{0.7}
\addplot [color5]
table {%
0 0.948495621097878
1 0.938056134887736
2 0.931080841857424
3 0.925336296828634
4 0.931610154765828
5 0.932184290098879
6 0.932692294692321
7 0.931127175951757
8 0.934236920794576
9 0.934494577509055
10 0.932762065379638
11 0.930782951779073
12 0.931478925411793
13 0.931050703031848
14 0.930646052743402
15 0.929868161755228
16 0.928506673865289
17 0.928411252451424
18 0.928027779292665
19 0.928612836586328
20 0.928544829645015
21 0.927196565325986
22 0.927294777323362
23 0.92766563220884
24 0.927483700743049
25 0.927185234765477
26 0.926660120637513
27 0.926664657605508
28 0.92581627064747
29 0.925351435665641
30 0.924597800600752
31 0.924604501821061
32 0.924328452453987
33 0.923935471751591
34 0.924057607283388
35 0.924130578686619
36 0.923334621686074
37 0.923212925217236
38 0.923398012568838
39 0.923730032139901
40 0.924002203885892
41 0.924051579074415
42 0.92362190880306
43 0.923656380676105
44 0.923225654342271
45 0.923164533665693
46 0.923375922383425
47 0.923215720581659
48 0.923274916683964
49 0.923846827869357
50 0.92404413143369
51 0.923832425624646
52 0.923960681322617
53 0.923727024626043
54 0.923593298770712
55 0.923189868941893
56 0.923369391617102
57 0.923305454137683
58 0.923190876873078
59 0.923755002247687
60 0.924333763527447
61 0.924619120981437
62 0.924767781630077
63 0.924583969728203
64 0.924512953429654
65 0.924617038362552
66 0.924358296194723
67 0.924092004545562
68 0.923938832504545
69 0.923825124461494
70 0.923664988186168
71 0.923565672189132
72 0.923317308243214
73 0.92319612576062
74 0.923344913687033
75 0.923337936688854
76 0.923265033675007
77 0.923374447273617
78 0.923418208804716
79 0.923467704159036
80 0.923383866882732
81 0.923368563195107
82 0.923198397369061
83 0.923224198501151
84 0.923415167503423
85 0.92352482046989
86 0.923338583312511
87 0.923180705259357
88 0.922929634728727
89 0.922798988602428
90 0.922951535818901
91 0.923251496353774
92 0.923632514648418
93 0.92361502647313
94 0.923311868834186
95 0.923147998299732
96 0.923143497868504
97 0.923211353734791
98 0.923358184060856
99 0.923521839318069
};
\addlegendentry{1}
\addplot [color6]
table {%
0 0.628635216969408
1 0.708751870786235
2 0.734306409621358
3 0.758238721055647
4 0.750476949728678
5 0.758472826289911
6 0.757965420131123
7 0.745277940374444
8 0.751757690398112
9 0.746281057045292
10 0.740526806663623
11 0.747379582789449
12 0.750299464099719
13 0.751821096581405
14 0.750993000833407
15 0.749667476281127
16 0.749140884718573
17 0.750595607640001
18 0.749986663297202
19 0.751992060053915
20 0.751988091010654
21 0.751342926720309
22 0.751703017956584
23 0.754170316078853
24 0.752549031416715
25 0.753798500327051
26 0.753080468502758
27 0.753488437571683
28 0.754103444270978
29 0.754723632301048
30 0.754696455691991
31 0.754573185813957
32 0.754866967490128
33 0.755941388405566
34 0.756021532216562
35 0.757462984783042
36 0.755983384616291
37 0.754343239848923
38 0.754102395561741
39 0.753324763266239
40 0.755304171122424
41 0.756918405471254
42 0.756338552626363
43 0.757501477662367
44 0.756881829133712
45 0.756382713496244
46 0.755937431478886
47 0.755874751746734
48 0.755363585701492
49 0.753891177696075
50 0.753217574277513
51 0.753195705078043
52 0.752192897663246
53 0.75190092275508
54 0.751451001048658
55 0.750627615304101
56 0.749983751016138
57 0.750268251721179
58 0.749526768443717
59 0.748235012765851
60 0.745934902622842
61 0.745675070980751
62 0.744962875091858
63 0.744215200853492
64 0.744701758891858
65 0.743063252563088
66 0.743507312284589
67 0.744017573970209
68 0.744501341426856
69 0.745271028507337
70 0.744080673380607
71 0.744292689150204
72 0.74514848717364
73 0.745100227451453
74 0.744516834725978
75 0.744508562002797
76 0.744409860523953
77 0.744487805689381
78 0.745456834961415
79 0.745163661382376
80 0.744612926009735
81 0.745167916624471
82 0.745419916955299
83 0.746406525928038
84 0.746701517149009
85 0.746881138892577
86 0.746851968309514
87 0.746387398117732
88 0.745656546006271
89 0.745066716359571
90 0.745287522572488
91 0.744333554864152
92 0.743838613823853
93 0.743696326389938
94 0.744522119415364
95 0.745015698769983
96 0.745297550950918
97 0.746060227853351
98 0.745839233263988
99 0.745473188878261
};
\addlegendentry{1.2}
\addplot [white!49.8039215686275!black]
table {%
0 0.283978321136175
1 0.256675626323097
2 0.251430635879974
3 0.246342815528094
4 0.244807725410435
5 0.255339258596202
6 0.253099705676376
7 0.253884038213146
8 0.251607274427014
9 0.242211659591354
10 0.240364814122513
11 0.242527101933511
12 0.249426088275502
13 0.25300884381489
14 0.256091805725464
15 0.262917672622006
16 0.26691297421361
17 0.262347647486798
18 0.265444356194296
19 0.265975950200009
20 0.263092485561089
21 0.260458077880924
22 0.262116015749539
23 0.26205030603855
24 0.260493810433502
25 0.259636357760226
26 0.258140764445675
27 0.255413159985554
28 0.252491728449902
29 0.252818597420885
30 0.252264274625079
31 0.251347618481147
32 0.251127928098791
33 0.25208086177974
34 0.251536593314547
35 0.2526893173039
36 0.253662861736509
37 0.25207841692137
38 0.251939441599119
39 0.250850002598216
40 0.249059258822451
41 0.247497720300653
42 0.249776228109739
43 0.248659681091257
44 0.247783608912019
45 0.249930851552928
46 0.250635242993235
47 0.249203810247555
48 0.24820638662723
49 0.247716197763436
50 0.245750336602033
51 0.244714882065139
52 0.243818850549596
53 0.243588737849901
54 0.244285558961105
55 0.244878900479216
56 0.245816194371735
57 0.24599936908463
58 0.246346451873609
59 0.246824988617982
60 0.246036478893542
61 0.245228456968446
62 0.24531857087111
63 0.245588934776176
64 0.2445879513021
65 0.245561431147784
66 0.24484660280765
67 0.244074290611505
68 0.244698254075187
69 0.244104821116097
70 0.243149561262112
71 0.242555729835486
72 0.242582066079918
73 0.241874345095552
74 0.243264407458828
75 0.243212114347733
76 0.243073851114378
77 0.2429405707058
78 0.242674462071051
79 0.243926665020089
80 0.243411109051352
81 0.24380608683631
82 0.243823279761179
83 0.243288547536249
84 0.242939940650402
85 0.242682546543667
86 0.24242873061335
87 0.242990903471399
88 0.242396592889475
89 0.243276789045343
90 0.242513238045764
91 0.243810208224117
92 0.24343875708258
93 0.242766181497067
94 0.242791419595871
95 0.244262034970691
96 0.243817284825882
97 0.243233738404181
98 0.242601307809792
99 0.242338152673257
};
\addlegendentry{1.5}
\addplot [color7]
table {%
0 0.150749173852652
1 0.141999254225488
2 0.129736075911558
3 0.127185576690885
4 0.123285167664793
5 0.118478586942696
6 0.116428655459643
7 0.114406561973524
8 0.113817749054806
9 0.112597527768387
10 0.113166769229422
11 0.114359948256574
12 0.112639387228649
13 0.116212580829446
14 0.116439909708947
15 0.117175185727564
16 0.117525281128817
17 0.116300873112101
18 0.118254434600528
19 0.118891560270326
20 0.119912866806444
21 0.120685444474981
22 0.121017389723772
23 0.119822855634348
24 0.120143328223885
25 0.120189283498128
26 0.119428541070903
27 0.119205175802322
28 0.118603779216424
29 0.118311915901404
30 0.118394299594003
31 0.117537735627329
32 0.118724131261167
33 0.119423944371909
34 0.119388623075652
35 0.119179514604507
36 0.119085124886737
37 0.119053212244087
38 0.11955103797221
39 0.119853612380059
40 0.119940791501351
41 0.120040730345139
42 0.120184034082461
43 0.122055443127128
44 0.121744983299124
45 0.121216896802679
46 0.121139630911132
47 0.120728960571127
48 0.121209168523055
49 0.121075889231375
50 0.120718076343994
51 0.120834297811928
52 0.12115483934672
53 0.121215360281573
54 0.121247595955036
55 0.120988202546787
56 0.120557998804704
57 0.120156201916694
58 0.120162543834347
59 0.119722292123797
60 0.119767340313978
61 0.119744099984257
62 0.119946960061793
63 0.119620369255522
64 0.119885529182937
65 0.12039139053517
66 0.120636632411198
67 0.120215911106917
68 0.119977925538895
69 0.120323737852378
70 0.120552621361498
71 0.120426385232285
72 0.120403329247023
73 0.120175444185007
74 0.119972395999459
75 0.119792576484405
76 0.119724612061868
77 0.119683400273998
78 0.119614947194031
79 0.119485976801819
80 0.119359581527615
81 0.119092428148774
82 0.119102982550462
83 0.119044777001526
84 0.119102044780675
85 0.118907969230188
86 0.118822123878993
87 0.118690360277228
88 0.118834554533955
89 0.118742062223591
90 0.1191590455667
91 0.119377637850349
92 0.11920065409019
93 0.119673482923274
94 0.119419571812334
95 0.11930702480909
96 0.119185420590903
97 0.119147239644692
98 0.119033463320763
99 0.119152014210726
};
\addlegendentry{1.7}
\addplot [color8]
table {%
0 0.0730506492134881
1 0.0731990923614922
2 0.0740708051902419
3 0.0812335040816388
4 0.0816221483582992
5 0.0808387446654309
6 0.0814451473274999
7 0.0799040056047957
8 0.0802053100266779
9 0.0838138843402676
10 0.0837507248297912
11 0.0830841170784506
12 0.0828275481475662
13 0.0823835225746099
14 0.0823997614454609
15 0.0822336371763727
16 0.0812305587828055
17 0.0816677652115698
18 0.082235168582687
19 0.0823474460753471
20 0.0820688750800536
21 0.0816951515785422
22 0.0815596418552118
23 0.0812397714681893
24 0.0809908981871033
25 0.0806243952801617
26 0.0802710228835086
27 0.0801821352835638
28 0.0798683837417349
29 0.0795755489693613
30 0.0796310835642204
31 0.0795366904981161
32 0.0793997586479016
33 0.0791551581162141
34 0.079659201847457
35 0.0795534038155397
36 0.0794781845370438
37 0.080185905535691
38 0.0803570032984329
39 0.0802926153452005
40 0.0804186515608445
41 0.080603317020021
42 0.0804276735826598
43 0.0801258780051699
44 0.0802560980734398
45 0.0803183387140103
46 0.0801637070225099
47 0.0800738479740916
48 0.0803420191533805
49 0.0802280574463825
50 0.0802929887584467
51 0.0802941508063966
52 0.0808352684196799
53 0.0807058552646266
54 0.0807465860695619
55 0.0806233784262983
56 0.08067856265735
57 0.0807608214477201
58 0.0808561509206879
59 0.0808557830181657
60 0.080676382593964
61 0.0805922379351346
62 0.0805164074089634
63 0.0804246393260519
64 0.0803839754159455
65 0.0803426290301221
66 0.0803042944332566
67 0.0803167259749822
68 0.0803291046846345
69 0.0801408127344698
70 0.0800132427039917
71 0.0801041450140025
72 0.0800402461145549
73 0.0799488954459907
74 0.0800881280800105
75 0.0801702092075078
76 0.0803104723870472
77 0.0801669536839551
78 0.0800270683657515
79 0.0800382262523344
80 0.0799519598691388
81 0.0799112104249893
82 0.0799232848019342
83 0.079818047370608
84 0.079782497840072
85 0.0798434196578235
86 0.0798675650528458
87 0.0800711030960893
88 0.0799480122022666
89 0.0799306463644793
90 0.0798637926467026
91 0.0799926487047322
92 0.0802281626060778
93 0.0801602774511919
94 0.0802556932856098
95 0.0801191617740212
96 0.0800260089781089
97 0.0799548318376536
98 0.0800415952724925
99 0.0799520127254728
};
\addlegendentry{2}
\end{axis}

\end{tikzpicture}
\vskip.3cm
\caption{ Energy/size for $100$ random graphs of size $n=1000$ following the Barabasi-Albert model with $10$ different parameters $\alpha$. Results suggest that energy is decreasing with $\alpha$.}
\end{figure}
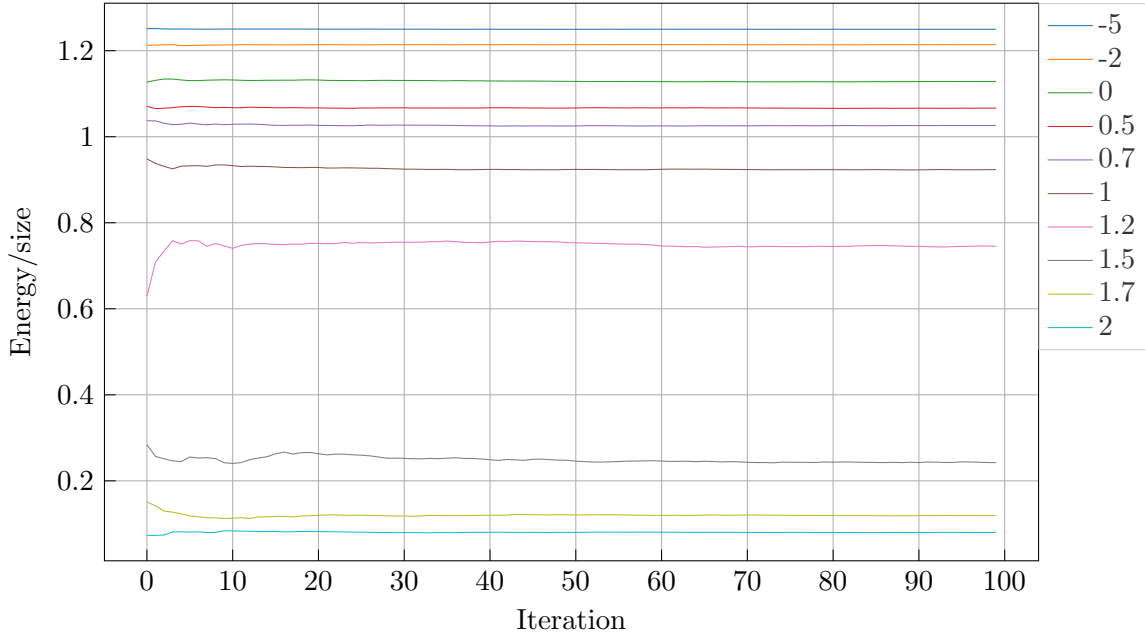

Moreover, in Figure 5, we preset the plot of the average ratio between energy and the size of $50$ BA random trees for various values of $\alpha$ ratio.  The energy appear to be decreasing with respect to $\alpha$. The value of $\alpha$ for which graphs start to be hypoenergetic in our simulations was $\tilde\alpha=0.8006376$. 

Thus let us state precisely the above observations as a conjecture.

\begin{conj}
Let $\{X_n\}_n>0$ be a sequence of trees with parameter $\alpha$.
\begin{enumerate}
\item  For every $\alpha$ there exists $g(\alpha)$ such that following limit exists almost surely
$$\lim_{n\to \infty}\frac{\mathcal{E}(X_n)}{n}=g(\alpha). $$ 
\item  For $\alpha<.79$ $g(\alpha)>1$. In particular, taking $\alpha=0$, this would imply that random recursive random trees are not hypoenergetic.
\item Para $\alpha>.81$, $g(\alpha)<1$.
\item  $g(\alpha)$ is strictly decreasing on $\alpha$ and continuous.
\item  $g(\alpha)\to0$ as $\alpha\to\infty$  and $g(\alpha)\to1.273$ as $\alpha\to\infty$.
\end{enumerate}\end{conj}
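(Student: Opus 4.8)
The plan is to recast the quantity $\mathcal{E}(X_n)/n$ as the first absolute moment of the empirical spectral distribution (ESD) $\hat\mu_n := \tfrac{1}{n}\sum_{i=1}^n \delta_{\lambda_i}$ of the adjacency matrix, namely $\mathcal{E}(X_n)/n = \int |x|\,d\hat\mu_n(x)$, and to identify $g(\alpha)$ as $\int|x|\,d\mu_\alpha(x)$ for a deterministic limiting measure $\mu_\alpha$. The entry point for part (1) is local weak (Benjamini--Schramm) convergence: for each fixed $\alpha$ the rooted tree $X_n$ should converge in distribution, in the local topology, to a unimodular random rooted tree $(\mathcal{T}_\alpha,o)$ whose law is governed by the preferential dynamics of \cite{ABo,KRL}. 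Convergence of the ESD of a sequence of bounded-degree graphs follows from such local convergence; the difficulty here is that $X_n$ does \emph{not} have bounded degree, so I would first establish weak convergence $\hat\mu_n \to \mu_\alpha$, where $\mu_\alpha$ is the expected spectral measure at the root of $\mathcal{T}_\alpha$, while controlling the few high-degree vertices separately.

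The pleasant feature that lets us pass from weak convergence of measures to convergence of the \emph{unbounded} functional $\int|x|$ is a deterministic second-moment bound: for every tree on $n$ vertices, $\int x^2\,d\hat\mu_n = \tfrac{1}{n}\mathrm{tr}(A^2) = \tfrac{2(n-1)}{n} < 2$. Thus $\sup_n \int x^2\,d\hat\mu_n \le 2$ with probability one, which forces uniform integrability of $x\mapsto|x|$ against $\{\hat\mu_n\}$, and hence $\int|x|\,d\hat\mu_n \to \int|x|\,d\mu_\alpha =: g(\alpha)$ as soon as $\hat\mu_n \Rightarrow \mu_\alpha$. Upgrading convergence in probability to the almost sure statement of part (1) should follow from concentration of $\mathcal{E}(X_n)$: adding one vertex (a single leaf and edge) perturbs the nuclear norm of $A$ by $O(1)$, so a martingale (Azuma/McDiarmid-type) bound along the growth process, combined with Borel--Cantelli, yields almost sure convergence.

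To make $g(\alpha)$ computable and to attack parts (2)--(5), I would write the recursive distributional equation (RDE) for the diagonal resolvent $R_o(z) = \langle \delta_o,(A_{\mathcal{T}_\alpha}-z)^{-1}\delta_o\rangle$ at the root of the limiting tree, using the tree recursion $R_o(z) = \bigl(-z - \sum_{c} R_c(z)\bigr)^{-1}$ over the offspring $c$ of the root. The first absolute moment is recovered via the identity $g(\alpha) = \tfrac{2}{\pi}\int_0^\infty \bigl(1 - t\,\mathrm{Im}\,R_o(it)\bigr)\,dt$ (after averaging over the root), reducing the asymptotic claims to analytic facts about the RDE solution. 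For part (3) I would instead take the shortcut of extending the explicit bound \eqref{bound3} of Theorem \ref{T4}: substituting the general-$\alpha$ degree law $q(d)$ and the edge-type frequencies $n_{kl}$ of \cite{BRST,KRL} into $\tfrac{1}{n}\sum_i 2\sqrt{d_i-1} - \tfrac{e_{2,2}}{3}(4-2\sqrt2)$ and checking that the resulting series is $<1$ for $\alpha$ above the threshold, exactly as for $\alpha=1$. The limits in part (5) are comparisons with the extremal trees: as $\alpha\to\infty$ the condensation of preferential attachment makes $\mathcal{T}_\alpha$ star-like (a single dominant hub), so $g(\alpha)\le \mathcal{E}(S_n)/n = 2\sqrt{n-1}/n \to 0$; as $\alpha\to-\infty$ attachment concentrates on leaves and the tree becomes path-like, giving $g(\alpha)\to \mathcal{E}(P_n)/n \to 4/\pi \approx 1.273$. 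Continuity in part (4) then follows from weak continuity of $\mu_\alpha$ in $\alpha$ together with the uniform second-moment bound.

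The two genuinely hard steps, and the reason this remains a conjecture, are the lower bound of part (2) and the strict monotonicity of part (4). For the lower bound one must show $g(\alpha)>1$ for small $\alpha$, and energy lower bounds are notoriously weak: the available deterministic estimate $\mathcal{E}(G)\ge 2m/\lambda_{\max} = 2(n-1)/\lambda_{\max}$ only yields $g(\alpha)\gtrsim 2/\lambda_{\max}$, which is borderline $\ge 1$ in the path-like regime and does not cross the threshold strictly; a genuine lower bound on $\int|x|\,d\mu_\alpha$ requires controlling the mass of $\mu_\alpha$ away from the origin directly from the RDE. Monotonicity is obstructed by the absence of an obvious monotone coupling of the attachment dynamics across different $\alpha$, since changing $\alpha$ reshapes the entire tree law; I would attempt either a stochastic-ordering argument for the RDE solution as $\alpha$ varies, or a differentiable-in-$\alpha$ analysis of $g$, both of which appear to require substantially new input. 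I therefore expect parts (1), (3), and the $\alpha\to\pm\infty$ limits of (5) to be within reach of the present techniques, with part (2) and the strict monotonicity in part (4) being the main obstacles.
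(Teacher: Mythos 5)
You should first note a category mismatch: the statement you were given is the paper's Conjecture 5.1, and the paper contains \emph{no proof} of it --- only simulation evidence (Figures 4 and 5, with an empirical threshold $\tilde\alpha\approx 0.80$) and the explicit caveat that ``since the degree distribution does not determine the energy, proving the monotonicity on $\alpha$ needs new ideas or tools.'' So there is no paper proof to compare against, and your submission is, appropriately, a research program rather than a proof; your triage (parts (1), (3) and the limits in (5) plausibly within reach, part (2) and strict monotonicity in (4) genuinely hard) matches the authors' own assessment. Several of your ingredients do check out: the uniform-integrability step via $\tfrac{1}{n}\mathrm{tr}(A^2)=\tfrac{2(n-1)}{n}<2$ is correct and is exactly what upgrades weak convergence of the empirical spectral distribution to convergence of $\mathcal{E}(X_n)/n$; the identity $g(\alpha)=\tfrac{2}{\pi}\int_0^\infty\bigl(1-t\,\mathrm{Im}\,R_o(it)\bigr)\,dt$ is algebraically right; the single-leaf perturbation bound $|\mathcal{E}(T')-\mathcal{E}(T)|\le 2$ follows from Theorem \ref{KF}; and you correctly read the paper's evident typo in (5) (the second limit should be $\alpha\to-\infty$, where the always-attach-to-a-leaf dynamics indeed yields a path and $\mathcal{E}(P_n)/n\to 4/\pi$).

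That said, three concrete gaps in your program deserve naming. First, the Azuma/McDiarmid step is not routine for general $\alpha$: changing one attachment decision cascades through the reinforced dynamics, so the increments of the Doob martingale $\E[\mathcal{E}(X_n)\mid\mathcal{F}_t]$ are not obviously $O(1)$. For $\alpha=1$ this can be repaired via the uniform half-edge representation (step $t$ attaches to a uniformly chosen half-edge among the first $2t-3$), which makes the tree coordinate-wise Lipschitz in independent choices; no such exchangeable representation exists for $\alpha\ne 1$, so almost-sure convergence in (1) needs a different argument there. Second, your shortcut for part (3) cannot reach $0.81$: the paper's own data (Figure 3) show the bound \eqref{bound3}-style partition estimate sits near $1.00$ at $\alpha=1$ while the true ratio is near $0.92$, i.e.\ the Ky Fan slack is roughly $0.08$; since the empirical threshold is $\tilde\alpha\approx 0.80$, the bound must cross $1$ at some $\alpha$ well above $0.81$, so ``exactly as for $\alpha=1$'' can only certify hypoenergeticity on a strictly smaller range. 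Moreover \cite{BRST} establishes the asymptotics of $n_d$ and $n_{kl}$ only in the linear case; for general $\alpha$ you would need concentration of degree and edge-type counts from branching-process machinery (Rudas--T\'oth--Valk\'o / Jagers--Nerman type results for $\alpha<1$), which neither \cite{BRST} nor \cite{KRL} supplies rigorously. Third, for $\alpha>1$ condensation means the local weak limit is not locally finite (the hub has unbounded degree in any neighborhood), so Bordenave--Lelarge-type transfer from local convergence to ESD convergence does not apply off the shelf; that regime is better handled by the direct star comparison you sketch for (5) than by the resolvent RDE. None of this contradicts your bottom line --- the statement remains a conjecture, and your identification of (2) and the monotonicity in (4) as the core obstructions is exactly where the paper leaves the problem.
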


We note that since the degree distribution does not determine the energy, proving the monotonicity on $\alpha$ needs new ideas or tools.

\newpage

\begin{figure}[h] \label{Monotonicity}

\begin{tikzpicture}

\definecolor{color0}{rgb}{0.12156862745098,0.466666666666667,1}

\begin{axis}[
height=9cm,
width=14cm,
tick align=inside,
tick pos=left,
title={Barabasi Albert, n=1000},
x grid style={white!69.0196078431373!black},
xlabel={$\alpha$},
xmajorgrids,
xmin=-2.1, xmax=5.1,
xtick style={color=black},
y grid style={white!69.0196078431373!black},
ylabel={Energy/Size},
ymajorgrids,
ymin=0.00579808008528975, ymax=1.27067790971786,
ytick style={color=black},
xtick={-2,-1,0,1,2,3,4,5},
extra x ticks={0.800637623560177616191853298},
extra x tick labels={$\tilde\alpha$}
]
\addplot [color0,only marks]
table {%
-2 1.21318337200729
-1.91139240506329 1.21089627452603
-1.82278481012658 1.20966601601875
-1.73417721518987 1.20686360001265
-1.64556962025316 1.20525388447232
-1.55696202531646 1.20245685203164
-1.46835443037975 1.20021337812354
-1.37974683544304 1.19776221957619
-1.29113924050633 1.19477013393243
-1.20253164556962 1.19213375423888
-1.11392405063291 1.18954037598238
-1.0253164556962 1.18460243931716
-0.936708860759493 1.18251826561115
-0.848101265822784 1.17890299846898
-0.759493670886076 1.17427547861974
-0.670886075949367 1.16999762232704
-0.582278481012658 1.16633577018957
-0.493670886075949 1.16231229004385
-0.40506329113924 1.15557642429919
-0.316455696202531 1.15184066489431
-0.227848101265822 1.14535245990629
-0.139240506329113 1.13937175911711
-0.0506329113924045 1.1294645733579
0.0379746835443043 1.12494241801603
0.126582278481013 1.11389421202261
0.215189873417722 1.10713203576014
0.303797468354431 1.09554908175684
0.39240506329114 1.08275925533306
0.481012658227849 1.06938412827562
0.569620253164558 1.05389142054227
0.658227848101266 1.03618939159151
0.746835443037975 1.01523701849108
0.835443037974684 0.990142950752623
0.924050632911393 0.95802062990086
1.0126582278481 0.921341039251881
1.10126582278481 0.859580063011068
1.18987341772152 0.7555278854036
1.27848101265823 0.585269593509467
1.36708860759494 0.432086165731238
1.45569620253165 0.299785890983952
1.54430379746836 0.185803332058609
1.63291139240506 0.151158197942669
1.72151898734177 0.115511549050222
1.81012658227848 0.0991877864508556
1.89873417721519 0.0880144329310653
1.9873417721519 0.0829109199118025
2.07594936708861 0.076687333074255
2.16455696202532 0.0729281590109873
2.25316455696203 0.0688837498568009
2.34177215189874 0.0690265327864508
2.43037974683544 0.0673213977720435
2.51898734177215 0.0665418458943931
2.60759493670886 0.0667084350928719
2.69620253164557 0.0659929485734555
2.78481012658228 0.0659445504605239
2.87341772151899 0.0647124747414721
2.9620253164557 0.0650634517991373
3.05063291139241 0.0647281540869593
3.13924050632912 0.0640766568344299
3.22784810126582 0.0639544844307279
3.31645569620253 0.0642650742088458
3.40506329113924 0.063820063941223
3.49367088607595 0.0640427626235379
3.58227848101266 0.0636672549989383
3.67088607594937 0.063817530136514
3.75949367088608 0.0637179625354343
3.84810126582279 0.0635605583960136
3.9367088607595 0.0634265882787498
4.0253164556962 0.0637560302389577
4.11392405063291 0.0637338768838993
4.20253164556962 0.0636073985452147
4.29113924050633 0.0633872406393775
4.37974683544304 0.0632926177958612
4.46835443037975 0.0633872406393775
4.55696202531645 0.0633993147449125
4.64556962025316 0.063560558049887
4.73417721518987 0.0634386623842848
4.82278481012658 0.0632926177958612
4.91139240506329 0.0634386623842848
5 0.0634265882787498
};
\addplot[color=red] 
coordinates {(0.800637623560177616191853298,0) (0.800637623560177616191853298,1.2706)};
\end{axis}

\end{tikzpicture}
\vskip.3cm

\caption{For $80$ values of $\alpha$ in the interval $[-2,5]$, we calculated the average ratio of Energy/size of $50$ random graphs following the BA-model with parameter $\alpha$ of size $n=1000$. 
}
\end{figure}
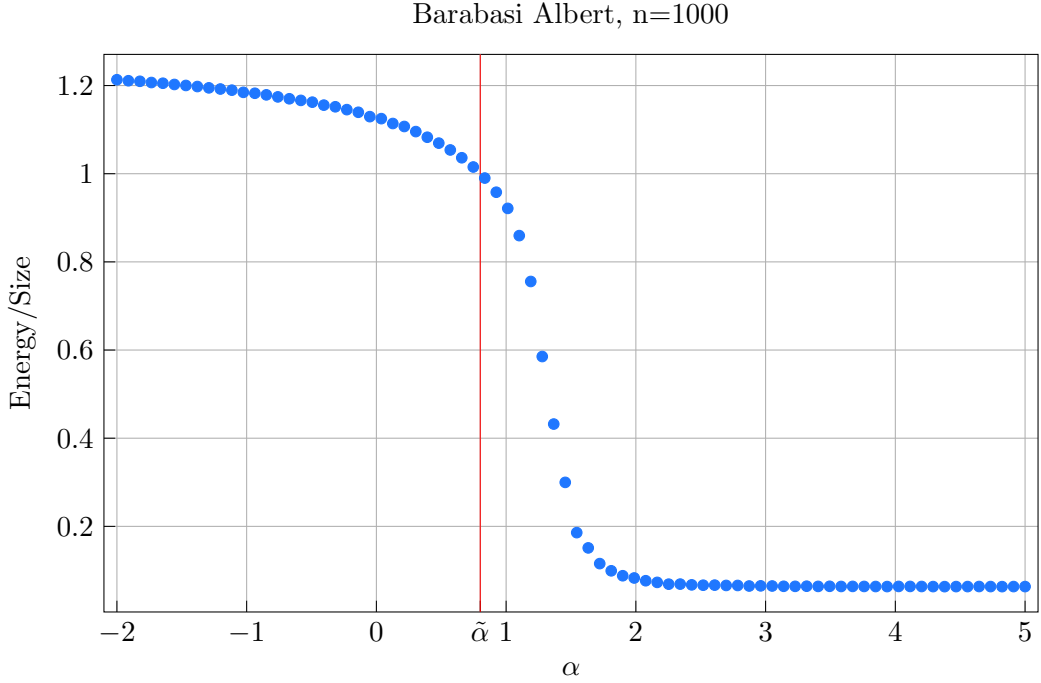

\subsection{Generic trees and Erd\"os-R\'enyi}

From the main result of this paper one may be lead to the conclusion that a typical tree is hypoenergetic. However, this is not true, since the preferential model of Barabasi-Albert does not choose a tree with uniform distribution, but favors trees with certain large hubs. 
\begin{figure}[h] 
\begin{tikzpicture}

\definecolor{color0}{rgb}{0.12156862745098,0.466666666666667,0.705882352941177}
\definecolor{color1}{rgb}{.8,0.4,0.35}
\begin{axis}[
width=15cm,
height=5cm,
tick align=inside,
tick pos=both,
title={Erdos Renyi $p=0.001$ y BA $\alpha=0$, $n=3000$},
x grid style={white!69.0196078431373!black},
xlabel={Iteration},
xmajorgrids,
xtick style={color=black},
y grid style={white!69.0196078431373!black},
ymajorgrids,
ytick style={color=black},
ymin=0.98, ymax=1.2,
xmin=-3, xmax=203,
legend style={at={(1,.315)}},
]
\addplot [color1]
table {%
	0 1.13728655134136
	1 1.12436046300123
	2 1.13204104729774
	3 1.12578994171712
	4 1.13028580889531
	5 1.13461953736855
	6 1.12840042232226
	7 1.12574623922613
	8 1.12750797272678
	9 1.12174673596828
	10 1.13044892845514
	11 1.12923482440545
	12 1.12651532691781
	13 1.12301641913804
	14 1.1254438415615
	15 1.13394343315649
	16 1.13049657579879
	17 1.13087295700704
	18 1.12346652861161
	19 1.12941251996334
	20 1.12989970187006
	21 1.12675839653298
	22 1.12683642552726
	23 1.13024549585901
	24 1.13100259687667
	25 1.12086833644673
	26 1.13025875341396
	27 1.12570132196627
	28 1.12443898749887
	29 1.12647536799515
	30 1.12684048099055
	31 1.12429257358391
	32 1.12246030130469
	33 1.12811926316089
	34 1.12381429123385
	35 1.12816828148865
	36 1.12517768654753
	37 1.1314761966239
	38 1.1273790717587
	39 1.12460104946806
	40 1.13102018850105
	41 1.12573575885913
	42 1.12375817014996
	43 1.12664191826047
	44 1.12683468910825
	45 1.12365277275531
	46 1.12158809988304
	47 1.12919120110408
	48 1.12888617978924
	49 1.12986622870456
	50 1.12868578252832
	51 1.13547858555613
	52 1.13200337327445
	53 1.1233222902577
	54 1.12884589237182
	55 1.13109425265355
	56 1.12636486948867
	57 1.12759520994699
	58 1.11655135014154
	59 1.12423113394227
	60 1.1229407113903
	61 1.12691021364222
	62 1.13146250644389
	63 1.13110190908521
	64 1.13079771254292
	65 1.12105281226887
	66 1.12943961025335
	67 1.12881984091035
	68 1.12854537691721
	69 1.12307328341637
	70 1.12834521835569
	71 1.12668063851189
	72 1.12873860185289
	73 1.12907846083191
	74 1.12714103563727
	75 1.13203776799292
	76 1.12025802004212
	77 1.12918682600507
	78 1.12424791358649
	79 1.13082697688517
	80 1.12823536358168
	81 1.12570129514307
	82 1.12491570622248
	83 1.13259382332598
	84 1.12261848122063
	85 1.12905357277678
	86 1.12801458952282
	87 1.12647079777086
	88 1.12596847791055
	89 1.12801726193499
	90 1.12796596254944
	91 1.12638551953248
	92 1.13348231455199
	93 1.12475403674911
	94 1.124907931365
	95 1.12447432705557
	96 1.13480208871474
	97 1.13011078428009
	98 1.13365924340192
	99 1.11928185996483
	100 1.13378672994895
	101 1.13335001251491
	102 1.12732900294306
	103 1.12767064480319
	104 1.13311547740744
	105 1.12277763306303
	106 1.12764094029827
	107 1.12635725916458
	108 1.1323962878215
	109 1.12066017991179
	110 1.12625021236252
	111 1.11932801421926
	112 1.13153516973884
	113 1.12323233114396
	114 1.12693718124403
	115 1.13138002842795
	116 1.12758458169136
	117 1.12649515406147
	118 1.12808875045739
	119 1.12712141427805
	120 1.12660626795406
	121 1.13060331874509
	122 1.12321318542349
	123 1.12252040933752
	124 1.12927894613581
	125 1.12689999862619
	126 1.12084385478733
	127 1.121316035049
	128 1.12796646745988
	129 1.12904566478314
	130 1.1242416306311
	131 1.12853891191162
	132 1.13186529344247
	133 1.12539213749627
	134 1.12795142019501
	135 1.12777380881384
	136 1.13944221811748
	137 1.12813427754922
	138 1.12472404903324
	139 1.13257591000403
	140 1.12689697125308
	141 1.12846558747285
	142 1.13186072352428
	143 1.13520333824561
	144 1.12832289362885
	145 1.12829220998198
	146 1.12722395410708
	147 1.11774388423672
	148 1.12544773773995
	149 1.12442525069001
	150 1.12804077545036
	151 1.12437483031672
	152 1.13143854640162
	153 1.12428232183632
	154 1.13519588718226
	155 1.12764634112353
	156 1.1274894755976
	157 1.12394400755864
	158 1.13641076442854
	159 1.12322454113132
	160 1.12332009913152
	161 1.12805173197966
	162 1.13215180081534
	163 1.12704956501259
	164 1.12431785921693
	165 1.13153251873978
	166 1.12885723362351
	167 1.12378643706963
	168 1.13189911957834
	169 1.12643236573935
	170 1.13085123508652
	171 1.12475753936685
	172 1.12483727288167
	173 1.12917798665892
	174 1.1231659432064
	175 1.13459178844621
	176 1.12144850384547
	177 1.12698322429507
	178 1.13025125647668
	179 1.12663724016932
	180 1.1292018776735
	181 1.12393077613618
	182 1.12843939810375
	183 1.12918065897838
	184 1.12728664080068
	185 1.12784495291883
	186 1.12643150504089
	187 1.12221278762177
	188 1.1340469075824
	189 1.1342717142822
	190 1.133011151986
	191 1.13110101835411
	192 1.12707855820176
	193 1.13239778896807
	194 1.12630414018988
	195 1.12463643880795
	196 1.12784845139902
	197 1.12420346376586
	198 1.12456750528849
	199 1.12765557186433
};
\addlegendentry{BA $\alpha=0$}
\addplot [color0]
table {%
	0 1.10488987517787
	1 1.08669595357476
	2 1.08234114901359
	3 1.10551418665283
	4 1.10021940649222
	5 1.09047114067758
	6 1.07673387721773
	7 1.09615370351271
	8 1.1094864435525
	9 1.10749124134275
	10 1.09241110045068
	11 1.08619983393587
	12 1.09831512445894
	13 1.08660187291786
	14 1.0727717888609
	15 1.11103376736513
	16 1.12416576240695
	17 1.11193273241572
	18 1.09136394396587
	19 1.07719074846675
	20 1.09372583217784
	21 1.08697932193879
	22 1.07904426838423
	23 1.11543318910217
	24 1.10264685067636
	25 1.08845321664415
	26 1.09578617499477
	27 1.09058518587652
	28 1.08923326724445
	29 1.07553227308647
	30 1.09201813669337
	31 1.09342409487652
	32 1.08293800482139
	33 1.11742650432763
	34 1.08670504264326
	35 1.08774165728754
	36 1.08844388055358
	37 1.08360322232719
	38 1.06848784223414
	39 1.10352512080923
	40 1.09115224621294
	41 1.09603740720699
	42 1.09852416736929
	43 1.08858341713076
	44 1.07260564411031
	45 1.08959383614007
	46 1.08688807264351
	47 1.10145431399416
	48 1.09379823326257
	49 1.09866366627051
	50 1.08205123188871
	51 1.08561906486738
	52 1.10038387509471
	53 1.1067486054168
	54 1.09322819935113
	55 1.10616794598055
	56 1.11694403703376
	57 1.07315214601976
	58 1.09941618780691
	59 1.09787254969514
	60 1.08174215955534
	61 1.09926702106978
	62 1.11863062984489
	63 1.10106474137805
	64 1.0861651628969
	65 1.07600483133727
	66 1.07614537682591
	67 1.10852841653338
	68 1.08622110260165
	69 1.10430805719775
	70 1.08564746601444
	71 1.07056450842288
	72 1.10471611487952
	73 1.08666315779226
	74 1.08683047659511
	75 1.09372436789755
	76 1.09238269471974
	77 1.08660848290634
	78 1.11282649266542
	79 1.0973968057725
	80 1.07763917936073
	81 1.09962992380516
	82 1.07590668998431
	83 1.09043539573625
	84 1.11615680198941
	85 1.08400131193354
	86 1.09345096231687
	87 1.10047506105004
	88 1.09069950134359
	89 1.09150284173457
	90 1.10315041792046
	91 1.10125434737773
	92 1.09786652327807
	93 1.06696236777567
	94 1.08234229377682
	95 1.08899651101818
	96 1.11081806347026
	97 1.09255501075386
	98 1.07232106379018
	99 1.08961657624538
	100 1.09036763303135
	101 1.0946542096874
	102 1.10280485715719
	103 1.10091720888831
	104 1.09235976621621
	105 1.06330125632488
	106 1.09215908057138
	107 1.07587707506139
	108 1.11907697254024
	109 1.11394365998018
	110 1.0996667124378
	111 1.10006312263945
	112 1.09921233762992
	113 1.0910297999257
	114 1.08734119631719
	115 1.09345231609271
	116 1.10424794374334
	117 1.09750121241397
	118 1.09370967333723
	119 1.09339876772124
	120 1.08246317405753
	121 1.07008446627524
	122 1.08188062138484
	123 1.09995872252535
	124 1.07919020926904
	125 1.08540870059684
	126 1.09862188500123
	127 1.09563313001496
	128 1.08784369528418
	129 1.0878527125239
	130 1.09645950431284
	131 1.09927953421736
	132 1.08448014507694
	133 1.0977316071534
	134 1.09446418258982
	135 1.11125262924467
	136 1.08998253099052
	137 1.09721229726377
	138 1.089049422024
	139 1.07244926997837
	140 1.11759383547656
	141 1.10342229179602
	142 1.08749579277913
	143 1.08646754617153
	144 1.08450335957023
	145 1.07969057438638
	146 1.07513809734835
	147 1.08903207832198
	148 1.11210358880347
	149 1.09732564233482
	150 1.0838103981523
	151 1.10918580651629
	152 1.1091574067044
	153 1.10401654674431
	154 1.09888376919726
	155 1.07485998352554
	156 1.08567292196124
	157 1.08738354691087
	158 1.07056615072162
	159 1.09408526893388
	160 1.106060601673
	161 1.10309125199025
	162 1.11023530215566
	163 1.10735351181974
	164 1.10362981342184
	165 1.12390858879985
	166 1.08954662044682
	167 1.10321003303638
	168 1.10549467054623
	169 1.11275471140791
	170 1.08957667168499
	171 1.10378662298644
	172 1.07992082888277
	173 1.09470238851788
	174 1.09291635637914
	175 1.07086431742603
	176 1.10388490650881
	177 1.07852834678699
	178 1.07669025741558
	179 1.06882843314437
	180 1.09774891731468
	181 1.10453673683652
	182 1.08715899154729
	183 1.07733213705273
	184 1.11026420422548
	185 1.10139384946199
	186 1.09889067973453
	187 1.12473610478598
	188 1.09590688281215
	189 1.09243578928987
	190 1.09959293159971
	191 1.0974139430217
	192 1.08590952768341
	193 1.10489070015226
	194 1.12164454091487
	195 1.08565698939159
	196 1.0996632537806
	197 1.1060230809149
	198 1.11524967452648
	199 1.11071911940218
};
\addlegendentry{Erd\"os-R\'enyi}
\addplot[color=red] 
coordinates {(-5,1) (205,1)};
\end{axis}

\end{tikzpicture}

\caption{Energy/size of $200$ random graphs of size $n=3000$ following the Erd\"os-R\'enyi model with parameter $p=2/n=0.000\bar6$ so that the expected number of edges is that of a tree of size $3000$. }
\end{figure}
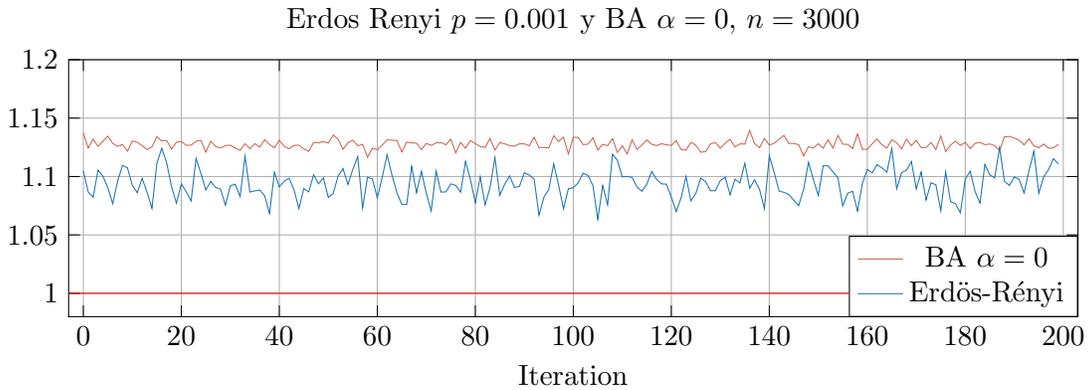

The correct model to choose a tree with uniform distribution is the random recursive tree, which corresponds to $\alpha=0$ and as can be seen from Figure 4 seems to be  not hypoenergetic. It is natural to compare this model with an Erd\"os-R\'enyi model with expected number of edges equal $n-1$. This is done in Figure 6.  In all of the instances, the resulting graph was also not hypoenergetic, but with smaller energy than that of uniform trees.  We conjecture that this is the case in general.  Of couse, similarly as for $\alpha$, one may vary $p$ to analyze how the energy varies.

\subsection{Conclusions}

In this paper we have shown that the energy of a BA tree is hypoenergetic. This result leads to new questions and conjectures on the behavior of other models for random trees or random graphs, which differ from the usual Erd\"os-R\'enyi graphs.

\end{document}